\newcommand{\R}{\mathbb{R}}
\newcommand{\M}{\mathcal{M}}
\newcommand{\N}{\mathbb{N}}
\newcommand{\schild}{\textrm{schild}}
\newcommand{\fs}{\textrm{fs}}
\newcommand{\pole}{\textrm{pole}}
\newcommand{\se}{\mathfrak{se}}
\begin{document}

\title{Numerical Accuracy of Ladder Schemes for Parallel Transport on Manifolds}

\titlerunning{Ladder Schemes}        

\author{Nicolas Guigui         \and
       		Xavier Pennec 
}


\institute{N. Guigui \at
              Universit\'e C\^ote d'Azur and Inria, Epione team \\
              \email{nicolas.guigui@inria.fr}           
}

\date{Submitted \today}

\maketitle

\begin{abstract}
Parallel transport is a fundamental tool to perform statistics on Riemannian manifolds. Since closed formulae don't exist in general, practitioners often have to resort to numerical schemes.
\textit{Ladder} methods are a popular class of algorithms that rely on iterative constructions of geodesic parallelograms. And yet, the literature lacks a clear analysis of their convergence performance.
In this work, we give Taylor approximations of the elementary constructions of Schild's ladder and the pole ladder with respect to the Riemann curvature of the underlying space. We then prove that these methods can be iterated to converge with quadratic speed, even when geodesics are approximated by numerical schemes. 
We also contribute a new link between Schild's ladder and the Fanning Scheme which explains why the latter  naturally  converges only linearly. 
The extra computational cost of ladder methods is thus easily compensated by a drastic reduction of the number of steps needed to achieve the requested accuracy.
Illustrations on the 2-sphere, the space of symmetric positive definite matrices and the special Euclidean group show that the theoretical errors we have established are measured with a high accuracy in practice. 
The special Euclidean group with an anisotropic left-invariant metric is of particular interest as it is a tractable example of a non-symmetric space in general, which reduces to a Riemannian symmetric space in a particular case.
As a secondary contribution, we compute the covariant derivative of the curvature in this space. 
\keywords{Riemannian Geometry \and Parallel Transport \and Numerical Scheme}
\subclass{MSC 53a35 \and MSC 53B21 \and 65D30}
\end{abstract}

\section{Introduction}
\label{intro}
In many applications, it is natural to model data as points that lie on a manifold. Consequently, there has been a growing interest in defining a consistent framework to perform statistics and machine learning on manifolds \cite{pennec_riemannian_2020}. 
A fruitful approach is to locally linearize the data by associating to each point a tangent vector. The parallel transport of tangent vectors then appears as a natural tool to compare tangent vectors across tangent spaces.
For example, \cite{brooks_riemannian_2019} use it on the manifold of symmetric positive definite (SPD) matrices to centralize batches when training a neural network. In \cite{yair_parallel_2019}, parallel transport is used again on SPD matrices for domain adaptation. In \cite{kim_smoothing_2019}, it is a key ingredient to spline-fitting in a Kendall shape space. In computational anatomy, it allows to compare longitudinal, intra-subject evolution across populations \cite{lorenzi_IPMI_2011, lorenzi_efficient_2014, cury_spatio-temporal_2016, schiratti_bayesian_2017}. It is also used in computer vision in e.g. \cite{hauberg_unscented_2013, freifeld_model_2014}.

However, there is usually no closed-form solution to compute parallel transport and one must use approximation schemes. Two classes of approximations have been developed. The first, intends to solve ordinary or partial differential equations (ODE/PDE) related to the definition of parallel transport itself or to the Jacobi fields \cite{younes_jacobi_2007, louis_fanning_2018}. For instance, \cite{kim_smoothing_2019} derived a homogeneous first-order differential equation stemming from the structure of quotient space that defines Kendall shape spaces. \cite{louis_fanning_2018} leveraged the relation between Jacobi fields and parallel transport to derive a numerical scheme that amounts to integrating the geodesic equations. They prove that a convergence speed of order one is reached. This scheme is particularly appealing as only the Hamiltonian of the metric is required, and both the main geodesic and the parallel transport are computed simultaneously when no-closed form solution is available for the geodesics.

The second class of approximations, referred to as \textit{ladder} methods \cite{lorenzi_IPMI_2011, lorenzi_efficient_2014}, consists in iterating elementary constructions of geodesic parallelograms (Fig.~\ref{fig:schild_sphere}). The most famous, Schild's ladder (SL), was originally introduced by Alfred Schild in 1970 although no published reference exist\footnote{\cite{ehlers_geometry_1972} is often cited but no mention of the scheme is made in this work}. Its first appearance in the literature is in \cite{misner_gravitation_1973} where it is used to introduce the Riemannian curvature. A proof that the construction of a geodesic parallelogram ---i.e.\ one \textit{rung} of the ladder--- is an approximation of parallel transport was first given in \cite{kheyfets_schilds_2000}. However there is currently no formal proof that it converges when iterated along the geodesic along which the vector is to be transported, as prescribed in every description of the method, and because only the first order is given in \cite{kheyfets_schilds_2000}, Schild's ladder is considered to be a first-order method in the literature. 

\begin{figure}
    \centering
    \includegraphics[width=0.45\textwidth]{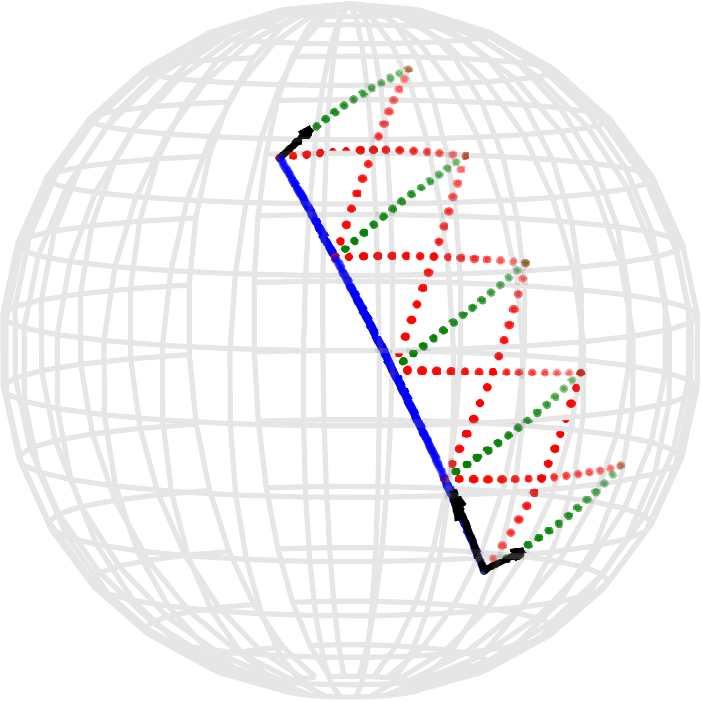}
    \caption{Schild's Ladder on the Sphere. The main geodesic is in blue, while the sides of the intermediate geodesic parallelograms are in dotted green and the diagonals in dotted red. The tangent vectors are represented by (scaled) black arrows. The construction is detailed in sec.~\ref{sec:elem_construction}}
    \label{fig:schild_sphere}
\end{figure}

The first aim of this paper is at filling this gap. Using a Taylor expansion of the \textit{double exponential} introduced by \cite{gavrilov_algebraic_2006, gavrilov_affine_2013}, we compute an expansion of the SL construction up to order three, with coefficients that depend on the curvature tensor and its covariant derivatives. 
Understanding this expansion then allows to give a proof of the convergence of the numerical scheme with iterated constructions, and to improve the scheme to a second-order method.
We indeed prove that an arbitrary speed of convergence in $\frac{1}{n^\alpha}$ can be obtained where $1\leq \alpha \leq 2$ and $n$ is the number of parallelograms, or rungs of the ladder.
This improves over the assumption made in \cite{louis_fanning_2018} that only a first-order convergence speed can be reached with SL. These results are observed with a high accuracy in numerical experiments performed using the open-source Python package \url{geomstats}\footnote{available at \url{http://geomstats.ai}} \cite{miolane_geomstats_2020}.

A slight modification of this scheme, Pole Ladder (PL), was introduced in \cite{lorenzi_efficient_2014}. It turns out that this scheme is exact in symmetric spaces, and in general that an elementary construction of PL is even more precise by an order of magnitude than that of SL \cite{pennec_parallel_2018}.
With the same method as for SL, we give a new proof of this result. By applying the previous analysis to PL, we demonstrate that a convergence speed of order two can be reached. Furthermore, we introduce a new construction which consists in averaging two SL constructions, and show that it is of the same order as the PL.

In most cases however, the exponential and logarithm maps are not available in closed form, and one also has to resort to numerical integration schemes. As for the Fanning Scheme (FS) \cite{louis_fanning_2018}, we prove that the ladder methods converge when using approximate geodesics and that all geodesics of the construction may be computed in one pass --- i.e.\ using one integration step (e.g. Runge-Kutta) per parallelogram construction, thus reducing the computational cost. We study the FS under the hood of ladder methods and show that  its implementation make it very close to SL. However, their elementary steps differ at the third order, so that the FS cannot be improved to a second-order method like SL or the PL.

To observe the impact of the different geometric structures on the convergence, we study the Lie group of isometries of $\R^d$, the special Euclidean group $SE(d)$. Endowed with a left-invariant metric, this space is a Riemannian symmetric space if the metric is isotropic \cite{zefran_generation_1998}. However, we show that it is no longer symmetric when using anisotropic metrics, and that geodesics may be computed by integrating the Euler-Poincar\'e equations. The same implementation thus allows to observe both geometric structures.
Furthermore, the curvature and its derivative may be computed explicitly \cite{milnor_curvatures_1976} and confirm our predictions. We treat this example in detail to demonstrate the impact of curvature on the convergence, and the code for the computations, and all the experiments of the paper are available online at \url{github.com/nguigs/ladder-methods}.

The first part of this paper is dedicated to Schild's ladder, while the second part applies the same methodology to the modified constructions. All the results are illustrated by numerical experiments along the way. The main proofs are in the text, and remaining details can be found in the appendices.

\subsection{Notations and Assumptions}

We consider a complete Riemannian manifold $(\M, g)$ of finite dimension $d \in \N$. The associated Levi-Civita connection defines a covariant derivative $\nabla$ and the parallel transport map. Denote $\exp$ the Riemannian exponential map, and $\log$ its inverse, defined locally. For $x \in \M$, let $T_x\M$ be the tangent space at $x$. The map $\exp_x$ sends $T_x\M$ to (a subset of) $\M$, and we will often write $x_w = \exp_x(w) \in \M$ for $w \in T_x\M$. 

Let $\gamma : [0,1] \rightarrow \M$ be a smooth curve with $\gamma(0) = x$. For $v \in T_x\M$, the parallel transport $\Pi_{\gamma, 0}^{t}v \in T_{\gamma(t)}\M$ of $v$ along $\gamma$ is defined as the unique solution at time $t \leq 1$ to the ODE $\nabla_{\dot \gamma(s)}X(s) = 0$ with $X(0) = v$.
In general, parallel transport depends on the curve followed between two points. In this work however, we focus on the case where $\gamma$ is a geodesic starting at $x$, and let $w$ be its initial velocity, i.e. $\gamma(t) = \exp_x(tw)$. Thus the dependence on $\gamma$ in the notation $\Pi_{\gamma}$ will be omitted, and we instead write $\Pi_x^y$ for the parallel transport along the geodesic joining $x$ to $y$ when it exists and is unique. The methods developed below can be extended straightforwardly to piecewise geodesic curves.

We denote by $\|\cdot\|$ the norm defined on each tangent space by the metric $g$ and $R$ the Riemann curvature tensor that maps for any $x \in \M$, $u, v, w \in T_x\M$ to $R(u,v)w \in T_x\M$. Throughout this paper, we consider $\gamma$ to be contained in a compact set $K \subset \M$ of diameter $\delta >0$, and thus $R$ and all its covariant derivatives $\nabla^n R$ can be uniformly bounded. 

\subsection{Double exponential and Neighboring logarithm}
\label{sec:double_exp}

We now introduce the main tool to compute a Taylor approximation of the ladder constructions, the double exponential (also written $\exp$) \cite{gavrilov_algebraic_2006, gavrilov_affine_2013}. It is defined for $x \in \M$, $(v, w) \in (T_x\M)^2$ by first applying $\exp_x$ to $v$, then $\exp_{x_v}$ to the parallel transport of $w$ along the geodesic from $x$ to $x_v$:
\begin{equation*}
    \exp_x(v, w) = \exp_{x_v}(\Pi_x^{x_v}w).
\end{equation*}

As the composition of smooth maps, it is also a smooth map. As the exponential map is locally one-to-one and the parallel transport is an isomorphism, we may define the function $h_x: U_x \rightarrow T_x\M$ on an open neighborhood $U_x$ of $(0,0) \in T_x\M \times T_x\M$ such that $\exp_x(U_x)$ is contained in a convex neighborhood of $\M$ and (Fig.~\ref{fig:double_exp}):
\begin{equation*}
    \exp_x(h_x(v, w)) = \exp_{x}(v, w) \quad \forall v,w \in T_x\M.
\end{equation*}

\begin{figure}
    \centering
    \includegraphics[width=0.7\textwidth]{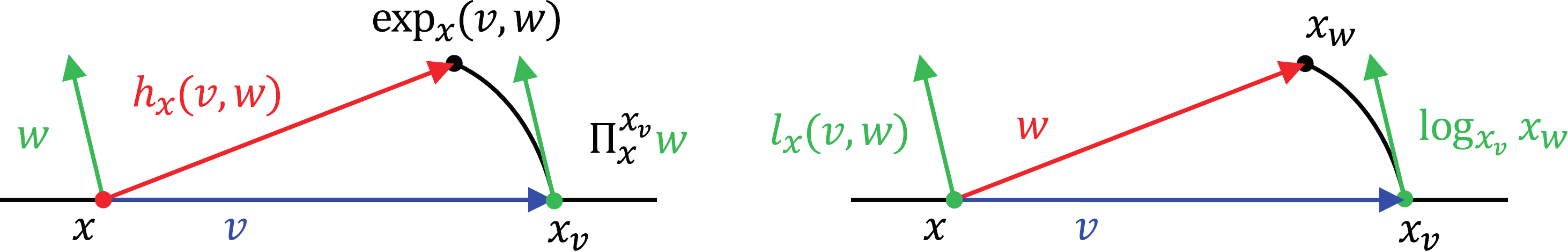}
    \caption{The double exponential and its inverse (left) and the neighboring logarithm (right) represented in a normal coordinate system centred at x. Geodesics are in black and tangent vectors are coloured}
    \label{fig:double_exp}
\end{figure}
As it is a smooth map, we can apply the Taylor theorem to $(s,t) \mapsto h_x(sv, tw)$. As explained in \cite{gavrilov_algebraic_2006, gavrilov_affine_2013}, its derivative are invariants of the connection that can be expressed in terms of the curvature and its covariant derivatives.

In this work we will require an approximation at order four, for $s,t\in \R$ small enough:
\begin{align}
    h_x(sv, tw) &= \log_x(\exp_{x_{sv}}(\Pi_x^{x_{sv}} tw)) \nonumber\\
              &= sv + tw +\frac{st}{6} R(w,v)(sv + 2tw) \nonumber\\
              &\quad + \frac{st}{24}\Big( (\nabla_v R)(w,sv)(5 tw + 2sv)\nonumber\\
              &\quad+ (\nabla_{w} R)(tw,v)(sv + 2tw) \Big) + q_5(sv,tw),
    \label{eq:taylor_double_exp}
\end{align}
where $q_5(sv,tw)$ contains homogeneous terms of degree five and higher, whose coefficients can be bounded uniformly in $K$ (as they can be expressed in terms of the curvature and its covariant derivatives). Because we consider in this paper the Levi-Civita connection of a Riemannian manifold, there is no torsion term appearing here.
Thus, taking $s,t\sim \frac{1}{n}$ for some $n \in \N$ large enough, $q_5(sv,tw) = O(\frac{1}{n^5})$. To simplify the notations, we write $O(5)$ in that sense. Formally \eqref{eq:taylor_double_exp} is similar to the BCH formula in Lie groups.

Similarly, \cite{pennec_curvature_2019} introduced the \textit{neighboring log} and computed its Taylor approximation. It is defined by applying the $\exp_x$ map to small enough $v,w \in T_x\M$ to obtain $x_x, x_w$ then computing the log of $x_w$ from ${x_v}$, and finally parallel transporting this vector back to $x$ (see Fig.~\ref{fig:double_exp}).
This defines $l_x: U_x \rightarrow T_x\M$ by:
\begin{equation*}
    l_x(v,w) = \Pi_{x_v}^x\log_{x_v}({x_w}),
\end{equation*}
which relates to the double exponential by solving
\begin{equation*}
    w = h_x(v, l_x(v,w)).
\end{equation*}

Using \eqref{eq:taylor_double_exp}, one can solve for the first terms of a Taylor expansion and obtains for $(v,w) \in U_x \subset T_x\M \times T_x\M$ \cite{pennec_curvature_2019}:
\begin{align}
    l_x(v, w) &= \Pi_{x_v}^x\log_{x_v}(x_w) \nonumber\\
              &= w - v +\frac{1}{6} R(v,w)(2w - v) \nonumber\\
              &\quad + \frac{1}{24}\Big( (\nabla_v R)(v,w)(3w - 2v) \nonumber\\
              &\quad+ (\nabla_{w} R)(v,w)(2 w - v) \Big) + O(5),
    \label{eq:taylor_double_log}
\end{align}
where we have implicitly assumed $v,w$ small enough and taken the time variables $s=t=1$. We will do so in the next section to simplify the notations.

\section{Schild's ladder}
We now turn to the construction of Schild's ladder and to the analysis of this numerical scheme.

\subsection{Elementary Construction}
\label{sec:elem_construction}
The construction to parallel transport $v \in T_x\M$ along the geodesic $\gamma$ with $\gamma(0) = x$ and $\dot \gamma(0) = w \in T_x\M$ (such that $(v,w) \in U_x$) is given by the following steps (Fig.~\ref{fig:schild_construction}):
\begin{enumerate}
    \item Compute the geodesics from $x$ with initial velocities $v$ and $w$ until time $s=t=1$ to obtain $x_v$ and $x_w$. These are the sides of the parallelogram.
    \item Compute the geodesic between $x_v$ and $x_w$ and the midpoint $m$ of this geodesic. i.e.\
    \begin{equation*}
        m = \exp_{x_v}\big(\frac{1}{2} \log_{x_v}(x_w)\big).
    \end{equation*}
    This is the first diagonal of the parallelogram.
    \item Compute the geodesic between $x$ and $m$, let $a \in T_x\M$ be its initial velocity. Extend it beyond $m$ for the same length as between $x$ and $m$ to obtain $z$, i.e.\
    \begin{equation*}
        a = \log_x(m); \qquad z = \exp_x(2a) = x_{2a}.
    \end{equation*}
    This is the second diagonal of the parallelogram.
    \item Compute the geodesic between $x_w$ and $z$. Its initial velocity $u^w$ is an approximation of the parallel transport of $v$ along the geodesic from $x$ to $x_w$, i.e.\
    \begin{equation*}
        u^w = \log_{x_w}(x_{2a}).
    \end{equation*}
\end{enumerate}
 By assuming that there exists a convex neighborhood that contains the entire parallelogram, all the above operations are well defined. In the literature, this construction is then iterated along $\gamma$ without further precision.

\begin{figure}
    \centering
    \includegraphics[width=0.45\textwidth]{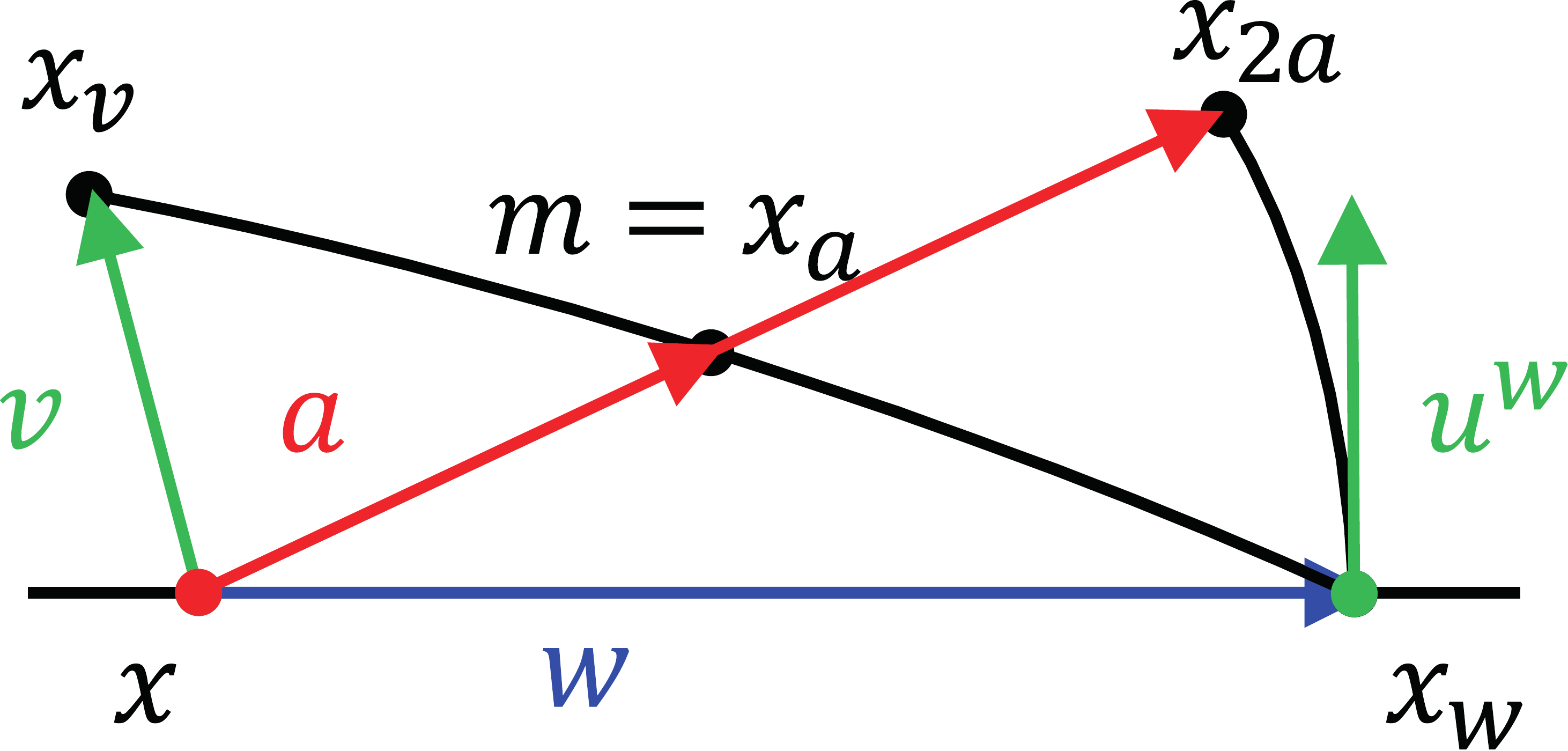}
    \caption{Construction of a geodesic parallelogram in Schild's ladder, represented in a normal coordinate system centred at $x$. Geodesics are in black and can be identified with tangent vectors (colored) when going through $x$.}
    \label{fig:schild_construction}
\end{figure}

\subsection{Taylor expansion}
\label{sec:taylor_schild}
We can now reformulate this elementary construction in terms of successive applications of the double exponential and the neighboring logarithm maps. Let $b^v=\log_{x_v}(x_w) \in T_{x_v}\M$ be the initial velocity of the geodesic computed in step 2 of the above construction, and $b$ its parallel transport to $x$. Therefore, we have $b = \Pi_{x_v}^x \log_{x_v}(x_w) = l_x(v,w)$. The midpoint is now computed by $m = \exp_{x_v}(\frac{b^v}{2})$, i.e.
\begin{equation}
    m = \exp_{x_v}(\Pi^{x_v}_x \frac{b}{2} )= h_x(v, \frac{b}{2}).
\end{equation}
Thus step 3. is equivalent to $a = h_x(v, \frac{1}{2} l_x(v,w))$. Combining the Taylor approximations \eqref{eq:taylor_double_exp} and \eqref{eq:taylor_double_log}, we obtain an expansion of $2a$. The computations are detailed in appendix~\ref{appendix_taylor_schild}, and we only report here the third order, meaning that all the terms of the form $\nabla_\cdot R(\cdot, \cdot)\cdot$ are summarized in the term $O(4)$:
\begin{equation}
    2a = w + v + \frac{1}{6} R(v, w)(w - v) + O(4) \label{eq:2a}
\end{equation}
We notice that this expression is symmetric in $v$ and $w$, as expected. Furthermore, the deviation from the Euclidean mean of $v,w$ (the parallelogram law) is explicitly due to the curvature. Accounting for this correction term is a key ingredient to reach a quadratic speed of convergence.

Now, in order to compute the error $e_x$ made by this construction to parallel transport $v$, $e_x=\Pi_x^{x_w}v - u^w$, we parallel transport it back to $x$: define $u = \Pi^x_{x_w}u^w$ and $e = \Pi^x_{x_w} e_x = v - u\in T_x\M$. Now as $u^w = \log_{x_w}(x_{2a})$, we have $u = l_x(w, 2a)$. Combining \eqref{eq:taylor_double_log} with \eqref{eq:2a} (see appendix~\ref{appendix_taylor_schild}), we obtain the first main result of this paper, a third order approximation of the Schild's ladder construction: 

\begin{theorem}
\label{thm:shild}
Let $(\M, g)$ be a finite dimensional Riemannian manifold. Let $x\in \M$ and $v,w \in T_x\M$ sufficiently small. Then the output $u$ of one step of Schild's ladder parallel transported back to $x$ is given by
\begin{equation}
    u = v + \frac{1}{2} R(w, v)v + O(4) \label{eq:taylor_schild}
\end{equation}
\end{theorem}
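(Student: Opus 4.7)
The plan is to simply combine the two Taylor expansions already prepared in the text. All the geometric setup has been done: the authors have rewritten the elementary Schild construction as $u = l_x(w, 2a)$ where $2a$ admits the expansion~\eqref{eq:2a}, and they have the expansion~\eqref{eq:taylor_double_log} of $l_x$ in hand. So the remaining work is algebraic bookkeeping of orders.

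First I would substitute the expansion of $2a$ into $l_x(w, 2a)$. Writing $2a = v + w + \frac{1}{6}R(v,w)(w-v) + O(4)$, I would treat the curvature contribution to $2a$ as already being of order three; therefore, whenever it is fed into a term of~\eqref{eq:taylor_double_log} that already carries a factor of $R$, it produces only $O(5)$ and can be dropped. In particular, in the quadratic curvature correction of $l_x(w, \cdot)$ I may legitimately replace $2a$ by $v+w$ modulo $O(4)$.

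Carrying this out, the linear part contributes
\begin{equation*}
2a - w = v + \tfrac{1}{6} R(v,w)(w-v) + O(4),
\end{equation*}
while the quadratic-curvature part of~\eqref{eq:taylor_double_log}, namely $\tfrac{1}{6}R(w, 2a)(2 \cdot 2a - w)$, reduces under the substitution $2a \equiv v+w$ to $\tfrac{1}{6}R(w, v+w)(2v+w)$. Using $R(w,w) = 0$, this collapses to $\tfrac{1}{6}R(w,v)(2v + w)$. All cubic-curvature terms $\nabla_\cdot R$ are absorbed into $O(4)$, consistent with the statement.

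Summing the two pieces and using the antisymmetry $R(v,w) = -R(w,v)$ to combine them,
\begin{equation*}
u = v + \tfrac{1}{6} R(w,v)(v - w) + \tfrac{1}{6} R(w,v)(2v+w) + O(4) = v + \tfrac{1}{2} R(w,v) v + O(4),
\end{equation*}
which is the claimed formula. The only delicate point is order-tracking: one must be confident that the $O(4)$ bound on the remainder $q_5$ (uniform on the compact set $K$) propagates through the composition, so that replacing $2a$ by its truncation inside curvature terms does not spoil the third-order statement. This is the main, though mild, obstacle, and it is handled by the uniform bounds on $R$ and its covariant derivatives noted at the end of the Notations subsection.
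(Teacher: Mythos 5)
Your proposal is correct and follows essentially the same route as the paper: the appendix proof likewise substitutes the expansion \eqref{eq:2a} of $2a$ into $l_x(w,2a)$, replaces $2a$ by $v+w$ inside the curvature term (the discrepancy being $O(5)$), and uses $R(w,w)=0$ together with antisymmetry to collapse the two curvature contributions to $\frac{1}{6}R(w,v)(3v)=\frac{1}{2}R(w,v)v$. The order-tracking point you flag is exactly the one the paper handles via the uniform bounds on $R$ and $\nabla R$ on the compact set $K$ (made quantitative in lemma~\ref{lemma:r4}).
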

The fourth order and a bound on the remainder are detailed in appendix~\ref{appendix_taylor_schild}. This theorem shows that Schild's ladder is in fact a second-order approximation of parallel transport. Furthermore, this shows that splitting the main geodesic into segments of length $\frac{1}{n}$ and simply iterating this construction $n$ times will in fact sum $n$ error terms of the form $R(\frac{w_i}{n},v_i)v_i$, hence by linearity of $R$, the error won't necessarily vanish as $n \rightarrow \infty$. To ensure convergence, it is necessary to also scale $v$ in each parallelogram. The second main contribution of this paper is to clarify this procedure and to give a proof of convergence of the numerical scheme when scaling both $v$ and $w$. This is detailed in the next subsection.

\subsection{Numerical Scheme and proof of convergence}
With the notations of the previous subsection, let us define $\schild(x, w, v) = u^w \in T_{x_w}M$. We now divide the geodesic $\gamma$ into segments of length $\frac{1}{n}$ for some $n\in \N^*$ large enough so that the previous Taylor expansions can be applied to $\frac{w}{n}$ and $\frac{v}{n}$. As mentioned before, $v$ needs to be scaled as $w$. In fact let $\alpha \geq 1$ and consider the sequence defined by (see Fig.~\ref{fig:shild_iterated_construction})
\begin{align}
    \label{eq:sequence_schild}
    v_0 &= v \nonumber \\
    v_{i+1} &= n^\alpha \cdot \schild(x_i, \frac{w_i}{n}, \frac{v_i}{n^\alpha}),
\end{align}
where $x_i = \gamma(\frac{i}{n})=\exp_x \big( \frac{i}{n} w \big)$, $w_i = n \log_{x_i}(x_{i + 1}) = \Pi_x^{x_i} w$. We now establish the following result, which ensures convergence of Schild's ladder to the parallel transport of $v$ along $\gamma$ at order at most two.

\begin{theorem}
\label{thm:convergence_schild}
Let $(x_i, v_i, w_i)_{(i\leq n)}$ be the sequence defined as above. Then 
$\exists \tau > 0, \exists \beta >0, \exists N \in \N, \forall n > N$,
\[\|v_n - \Pi^{x_n}_x v\| \leq \frac{\tau}{n^\alpha} + \frac{\beta}{n^2}.\]
Moreover, $\tau$ is bounded by a bound on the sectional curvature, and $\beta$ by a bound on the covariant derivative of the curvature tensor.
\end{theorem}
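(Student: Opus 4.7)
The plan is to pull everything back to the initial tangent space $T_x\M$ via parallel transport and analyse a single recursion there. Define $\tilde v_i := \Pi_{x_i}^x v_i \in T_x\M$, so $\tilde v_0 = v$ and the statement amounts to bounding $\|\tilde v_n - v\|$; since parallel transport is an isometry, $\|\tilde v_i\| = \|v_i\|$, and by the definition of $w_i$, $\|w_i\| = \|w\|$. Applying Theorem~\ref{thm:shild} at $x_i$ to the inputs $w_i/n$ and $v_i/n^\alpha$ (which are small for $n$ large), multiplying the output by $n^\alpha$, and parallel transporting from $x_{i+1}$ back to $x$ yields the one-step recursion
\[
\tilde v_{i+1} = \tilde v_i + \frac{1}{2n^{1+\alpha}} \Pi_{x_i}^x R(w_i, v_i) v_i + r_i, \qquad r_i := \Pi_{x_i}^x\bigl(n^\alpha\cdot O(4)\bigr).
\]

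The central estimate is the uniform bound $\|r_i\| \leq C\|v_i\|\|w\|^3/n^3$. This rests on the observation that every monomial in the $O(4)$ remainder of Theorem~\ref{thm:shild} must carry at least one factor of $v$ and one of $w$: indeed the elementary Schild construction returns $u=0$ when $v=0$, and $u=v$ when $w=0$, so the remainder has to vanish in both degenerate cases. The slowest-decaying quartic monomial is therefore $\|v_i/n^\alpha\|\cdot\|w_i/n\|^3$, whose $n^\alpha$-rescaling is $O(\|v_i\|\|w\|^3/n^3)$; the constant $C$ is uniform in $i$ thanks to the uniform bounds on $\nabla R$ and higher curvature derivatives over the compact set $K$ that contains $\gamma$.

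A short induction then controls $\|v_i\|$: assuming $\|v_k\|\leq 2\|v\|$ for $k\leq i$, the recursion together with the sectional curvature bound $\|R(w_k, v_k)v_k\|\leq \kappa\|w\|\|v_k\|^2$ gives $\|v_{i+1}\| \leq \|v\|+O(1/n^\alpha) + O(1/n^2)$, which is $\leq 2\|v\|$ for $n\geq N$ large enough, closing the induction. Telescoping and taking norms,
\[
\|\tilde v_n - v\| \leq \sum_{i=0}^{n-1}\|\tilde v_{i+1}-\tilde v_i\| \leq n\cdot\frac{\kappa\|w\|(2\|v\|)^2}{2n^{1+\alpha}} + n\cdot\frac{C(2\|v\|)\|w\|^3}{n^3} = \frac{\tau}{n^\alpha} + \frac{\beta}{n^2},
\]
with $\tau=2\kappa\|w\|\|v\|^2$ bounded by a sectional curvature bound and $\beta=2C\|v\|\|w\|^3$ bounded by a $\nabla R$ bound, as stated.

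The main obstacle is the uniform-in-$i$ control of $r_i$. Without the degeneracy observation, a hypothetical purely-$w$ quartic term of order $1/n^4$ would become $1/n^{4-\alpha}$ after multiplication by $n^\alpha$ and would sum to $1/n^{3-\alpha}$, destroying the $1/n^2$ bound for $\alpha>1$. Verifying that no such term survives requires a careful inspection of the fourth-order Taylor computation from the appendix proving Theorem~\ref{thm:shild}, together with a uniform remainder bound inherited from the double exponential expansion \eqref{eq:taylor_double_exp}. Once this structural fact is in hand, the remainder of the argument is routine discrete Grönwall bookkeeping.
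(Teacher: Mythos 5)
Your proof is correct and follows essentially the same route as the paper: telescope the per-step errors back in $T_x\M$, bound the leading $R(w_i,v_i)v_i$ term via a sectional-curvature bound and the $n^\alpha$-rescaled fourth-order remainder by $O(n^{-3})$ per step, then close a bootstrap giving $\|v_i\|\leq 2\|v\|$ for $n$ large. Your a priori degeneracy observation (the remainder must vanish at $v=0$ and at $w=0$) is precisely what the paper's explicit appendix computation and Lemma~\ref{lemma:r4} confirm --- every fourth-order monomial carries at least one factor of $v$, so the slowest term is $(\nabla_w R)(v,w)w\sim st^3$ --- and your direct induction on $\|v_i\|$ is an equivalent, slightly cleaner substitute for the paper's minimal-counterexample contradiction.
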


\begin{figure}
    \centering
    \includegraphics[width=0.7\textwidth]{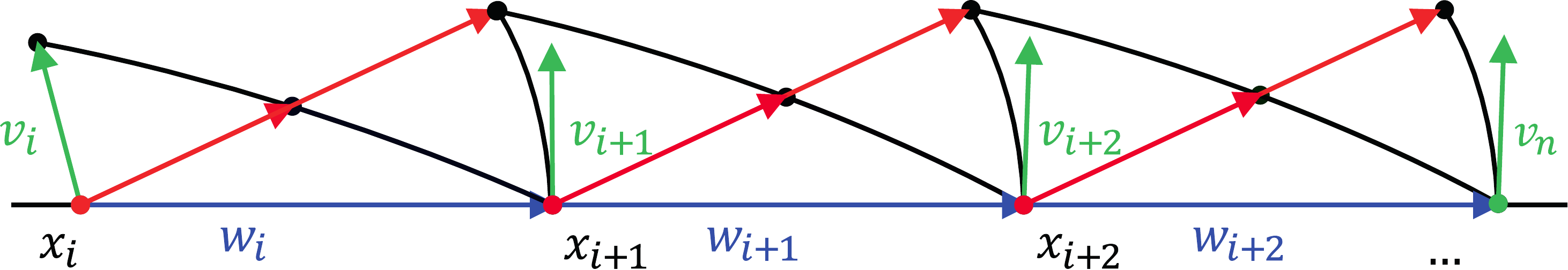}
    \caption{Schild's ladder, consisting in iterative constructions of geodesic parallelograms}
    \label{fig:shild_iterated_construction}
\end{figure}

\begin{proof}
To compute the accumulated error, we parallel transport back to $x$ the results $v_n$ of the algorithm after $n$ rungs. The error is then written as the sum of the errors at each rungs, parallel transported to $x$ (isometrically):
\begin{align}
      \frac{1}{n^\alpha} \| v_0 - \Pi_{x_n}^x v_n \| &= \| \sum_{i=0}^{n-1} \Pi_{x_i}^x \big( \frac{v_i}{n^\alpha} - \Pi_{x_i+1}^{x_i} \frac{v_{i+1}}{n^\alpha} \big) \| \nonumber\\
      &\leq \sum_{i=0}^{n-1} \|\Pi_{x_i}^x \big( \frac{v_i}{n^\alpha} - \Pi_{x_i+1}^{x_i} \frac{v_{i+1}}{n^\alpha} \big) \| \leq \sum_{i=0}^{n-1} \|e_i \|. \label{eq:error}
\end{align}
By \eqref{eq:taylor_schild} and lemma~\ref{lemma:r4}, the error at each step can be written
\begin{equation}
    e_i = \frac{v_i}{n^\alpha} - \Pi_{x_i+1}^{x_i} \frac{v_{i+1}}{n^\alpha} = \frac{1}{2n^{(1 + 2\alpha)}} R(v_i, w_i)v_i + r_4(\frac{w_i}{n},\frac{v_i}{n^\alpha}). \label{eq:ei}
\end{equation}
where $r_4$ contains the fourth order residual terms, and is given in appendix~\ref{appendix_taylor_schild}. We start as in \cite{louis_computational_2019} by assuming that $v_i$ doesn't grow too much, i.e.\ there exists $s \in \{1,\ldots,n\}$ such that $\forall i < s, \|v_i\| \leq 2 \|v\|$. We will then show that the control obtained on $v_n$ is tight enough so that when $n$ is large enough, $\forall k \leq n, \|v_k\| \leq 2\|v\|$. With this assumption each term of the right-hand side (r.h.s.) can be bounded. As $\alpha \geq 1$, we apply lemma~\ref{lemma:r4} (in appendix~\ref{appendix_taylor_schild}): $\exists \beta > 0$ such that for $n$ large enough:
\[\forall i \leq n, \|r_4(v_i, w_i)\| \leq \frac{\beta}{n^{\alpha + 3}}.\]
Similarly, as $\forall i \leq n, \; \|w_i\|=\|w\|$ and by assumption $\|v_i\| \leq 2 \|v\|$, we have
\begin{equation*}
    \|R(v_i, w_i)v_i\| \leq \| R\|_\infty \|v_i\|^{2} \| w_i \| \leq 4 \|v\|^2 \|R\|_\infty \| w \|
\end{equation*}
Let $\tau = 2 \|v\|^2 \|R\|_\infty \|w \|$. This gives
\begin{equation}
    \|e_i\| \leq \frac{\tau}{n^{(1 + 2\alpha)}} + \frac{\beta}{n^{\alpha + 3}}.
\end{equation}
As the r.h.s.\ does not depend on $n$, it can be plugged into \eqref{eq:error} to obtain by summing for $i=0,\ldots,s-1 \leq n$:
\begin{equation*}
      \frac{1}{n^\alpha}\| v_0 - \Pi_{x_s}^x v_s \| \leq \sum_{i=0}^{s-1} \|e_i \|
      \leq \frac{\tau}{n^{2\alpha}} + \frac{\beta}{n^{\alpha + 2}},
\end{equation*}
and finally
\begin{equation}
    \label{eq:control}
    \|v - \Pi_{x_s}^x v_s\| \leq \frac{\tau}{n^\alpha} + \frac{\beta}{n^2}.
\end{equation}
Now suppose for contradiction that for arbitrarily large $n$, there exists $k \leq n$ such that $\|v_k\| > 2\|v\|$ and choose $u(n) \in {1,\ldots,n}$ minimal with this property, i.e.\ $\|v_{u(n)}\| > 2\|v\|$ so that \eqref{eq:control} can be used with $s=u(n)$. Then we have:
\begin{equation}
    \|v\| < \big| \|v\| - \|v_{u(n)}\|\big| \leq \|v - \Pi_{x_{u(n)}}^x v_{u(n)}\| \leq \frac{\tau}{n^\alpha} + \frac{\beta}{n^2}.
\end{equation}
But the r.h.s.\ goes to $0$ as $n$ goes to infinity, leading to a contradiction. Therefore, for $n$ large enough, $\forall i \leq n$, $\|v_i\| \leq 2 \|v\|$, and the previous control on $\|v - \Pi_{x_s}^x v_s\|$ given by \eqref{eq:control} is valid for $s=n$. The result follows by parallel transporting this error to $x_n$, which doesn't change its norm.
\qed
\end{proof}

\begin{remark}
\label{remark:dominant}
The proof allows to grasp the origin of the bounds in Thm.~\ref{thm:shild}. The term $O(n^{-\alpha})$ comes from $R(w,v)v$ in \eqref{eq:taylor_schild}, as it is bilinear in $v$, so that the division by $n^\alpha$ is squared in the error, while we only need to multiply by $n^\alpha$ at the final step to recover the parallel transport of $v$. Similarly, as $R(w,v)v$ is linear in $w$, the division by $n$ and summing of $n$ terms doesn't appear in the error.

On the other hand, the $O(n^{-2})$ comes from $\nabla_w R(w,v)w$ (from lemma \ref{lemma:r4}). Indeed, this term is invariant by the division/multiplication by $n^\alpha$, unlike other error terms where $v$ appears several times, and the multilinearity w.r.t.\ $w$ implies that it is a $O(n^{-3})$ that is then summed $n$ times. We therefore use $\alpha=2$ in practice.
\end{remark}

\begin{remark}
The bound on the curvature that we use can be related to a bound on the sectional curvature $\kappa$ as follows. Recall
\begin{equation}
    \kappa(X,Y) = \frac{<R(Y,X)X,Y>}{\|X\|^2 \|Y\|^2 - <X,Y>^2}.
\end{equation}
Suppose it is bounded on the compact set $K$: $\forall X,Y \in \Gamma(K), |\kappa(X,Y)| < \|\kappa\|_\infty$
Therefore, for two linearly independent $X,Y$, write $\delta_{X,Y} = \frac{<X,Y>^2}{\|X\|^2 \|Y\|^2} \in [0,1)$,
\begin{align*}
    |<R(Y, X)X, Y>| &= |\kappa(X, Y)| (\|X\|^2 \|Y\|^2 - <X,Y>^2)\\
                  &= |\kappa(X, Y)| \|X\|^2 \|Y\|^2 (1 - \delta_{X,Y})\\
                  &\leq \|\kappa\|_\infty \|X\|^2 \|Y\|^2.
\end{align*}
Now, the linear map $Y \mapsto R(Y, X)X$ is self-adjoint, so if $|\lambda_1(X)|\leq \ldots \leq |\lambda_d(X)|$ are its eigenvalues, we have on the first hand $|\lambda_d(X)| \leq \|\kappa\|_\infty \|X\|^2$ and on the second hand $\|R(Y,X)X\| \leq |\lambda_d(X)|\|Y\|$. Thus
\begin{equation}
    \|R(Y,X)X\| \leq \|\kappa\|_\infty \|X\|^2 \|Y\|.
\end{equation}
We then could have used $\tau = \frac{\delta^3 \|\kappa \|_\infty}{2}$ in the proof of theorem~\ref{thm:convergence_schild}.
\end{remark}

We notice in this proof and remark \ref{remark:dominant} that the terms of the Taylor expansion where $v$ appears several times vanish faster thanks to the arbitrary exponent $\alpha \geq 1$. Together with lemma~\ref{lemma:r4}, this implies that the dominant term is the one where $v$ appears only once (given explicitly in appendix~\ref{appendix_taylor_schild}), which imposes here a speed of convergence of order two. We will use this key fact to compute the speed of convergence of the other schemes.

\subsection{Numerical Simulations}
\label{sec:numerical_sl}
In this section, we present numerical simulations that show the convergence bounds in two simple cases: the sphere $S^2$ and the space of $3\times 3$ symmetric positive-definite matrices $SPD(3)$.

\paragraph{The Sphere}
We consider the sphere $S^2$ as the subset of unit vectors of $\R^3$, endowed with the canonical ambient metric $<\cdot,\cdot>$. It is a Riemannian manifold of constant curvature, and the geodesics and parallel transport are available in closed form (these are given in appendix~\ref{appendix:sphere} for completeness).

Because the sectional curvature $\kappa$ is constant, the fourth-order terms vanish and theorem~\ref{thm:convergence_schild} becomes $\|v - \Pi_{x_n}^x v_n\| \leq \frac{\tau}{n^\alpha}$ for $\alpha \geq 1$. This is precisely observed on figure~\ref{fig:schild_sphere_alpha}.

\begin{figure}
    \begin{minipage}{.49\textwidth}
        \centering
        \includegraphics[width=0.9\textwidth]{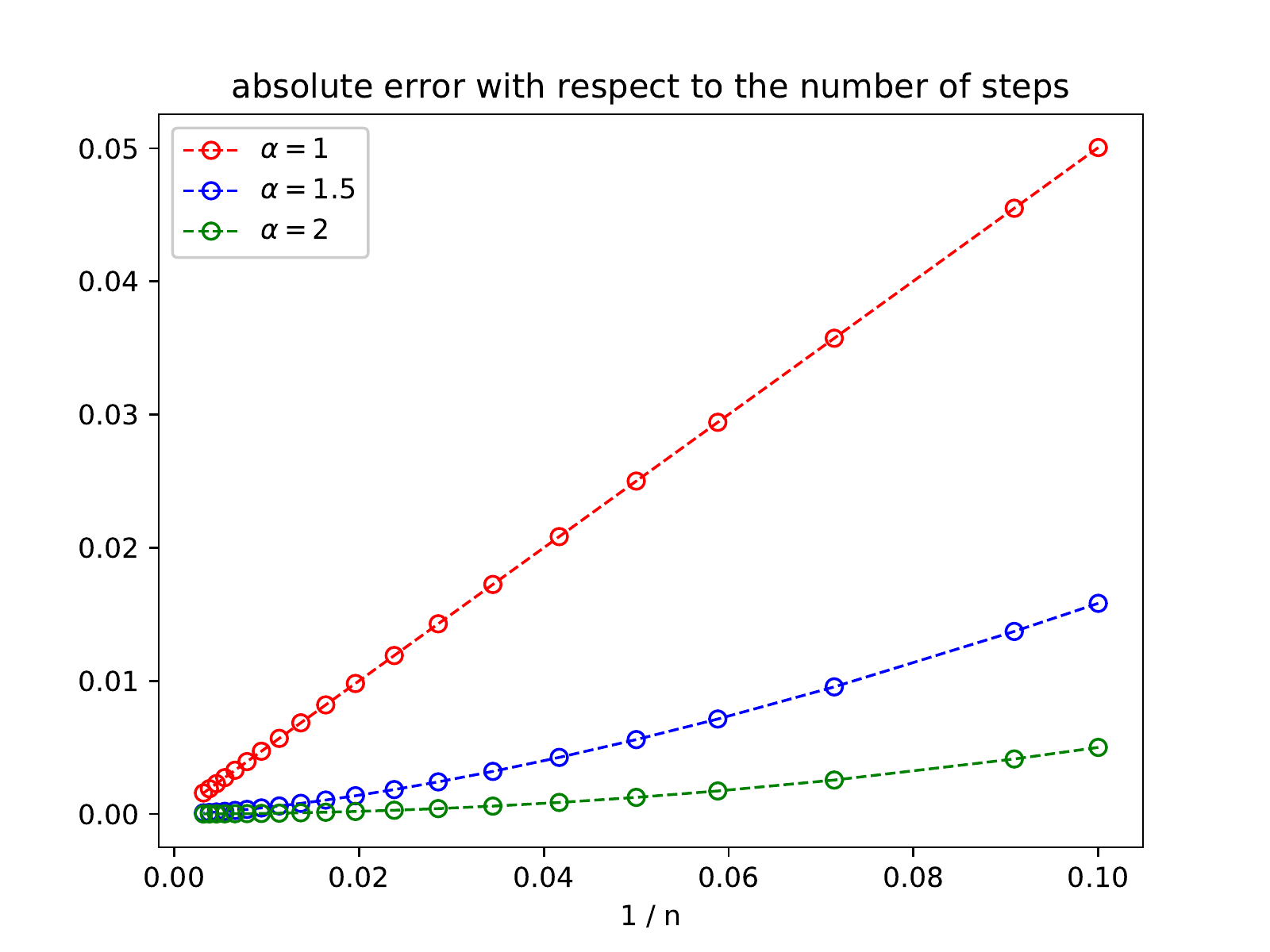}
    \end{minipage}
    \begin{minipage}{.5\textwidth}
        \centering
        \includegraphics[width=0.9\textwidth]{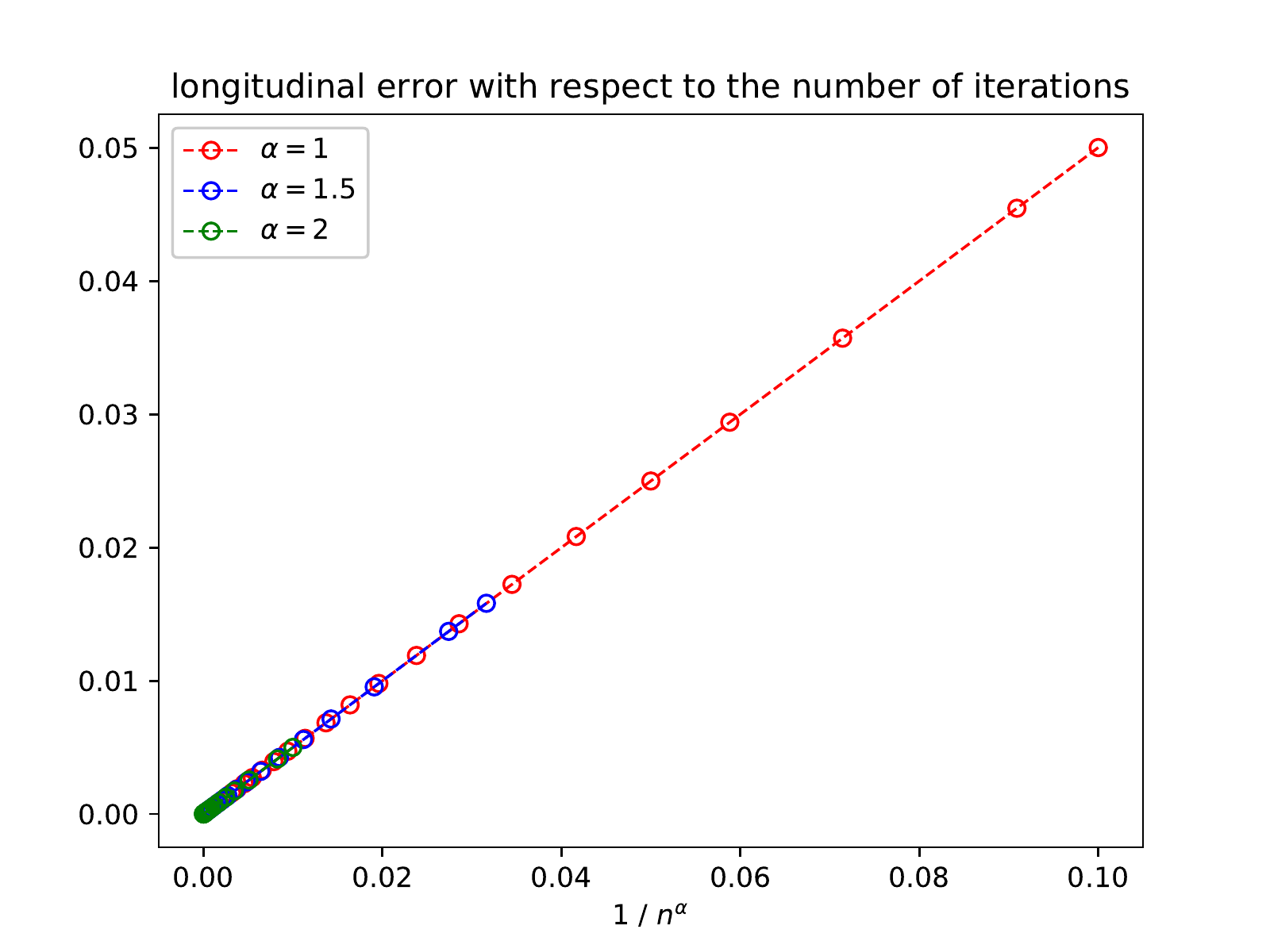}
    \end{minipage}
    \caption{Error of the parallel transport of $v$ along the geodesic with initial velocity $w$ where $v$ and $w$ are orthonormal. Accordingly with our main result, any speed of convergence $1 \leq \alpha \leq 2$ can be reached. Left : Absolute error w.r.t\ $\frac{1}{n}$. Right: Longitudinal error w.r.t.\ $\frac{1}{n^\alpha}$: on $S^2$ this is a straight line with slope $\frac{1}{2}$.}
    \label{fig:schild_sphere_alpha}
\end{figure}

Moreover, using Eq.(\ref{eq:ei}), we have at each rung:
\begin{align*}
    <\Pi_{x_i}^x e_i, w> &= <e_i, w_i> = \frac{1}{2n^{(1+2\alpha)}}<R(v_i, w_i)v_i, w_i> + O(\frac{1}{n^{(4 + \alpha)}})\\
            &= \frac{-1}{2n^{(1+2\alpha)}}\kappa(v_i, w_i) = \frac{-1}{2n^{(1+2\alpha)}} + O(\frac{1}{n^{(4 + \alpha)}}).
\end{align*}
Therefore, by summing as in the proof of theorem~\ref{thm:convergence_schild} we obtain: 
\begin{equation*}
    <v_n - \Pi^{x_n}_x v, w_n> = \frac{1}{2n^\alpha} + O(\frac{1}{n^3}).
\end{equation*}
Thus the projection of the error onto $w$, which we call longitudinal error, allows to verify our theory: we can measure the slope of the decay with respect to $n^{-\alpha}$ and it should equal $\frac{1}{2}$. This is very precisely what we observe (Fig.\ref{fig:schild_sphere_alpha}, right).

\paragraph{SPD} We now consider $SPD(3)$ endowed with the affine-invariant metric \cite{pennec_manifold-valued_2020}. Again, the formulas for the exp, log and parallel transport maps are available in closed form (appendix~\ref{appendix:spd}). Because the sectional curvature is always non-positive, the slope of the longitudinal error is negative, and this is observed on Fig.~\ref{fig:schild_spd}.

\begin{figure}
    \begin{minipage}{.49\textwidth}
        \centering
        \includegraphics[width=0.9\textwidth]{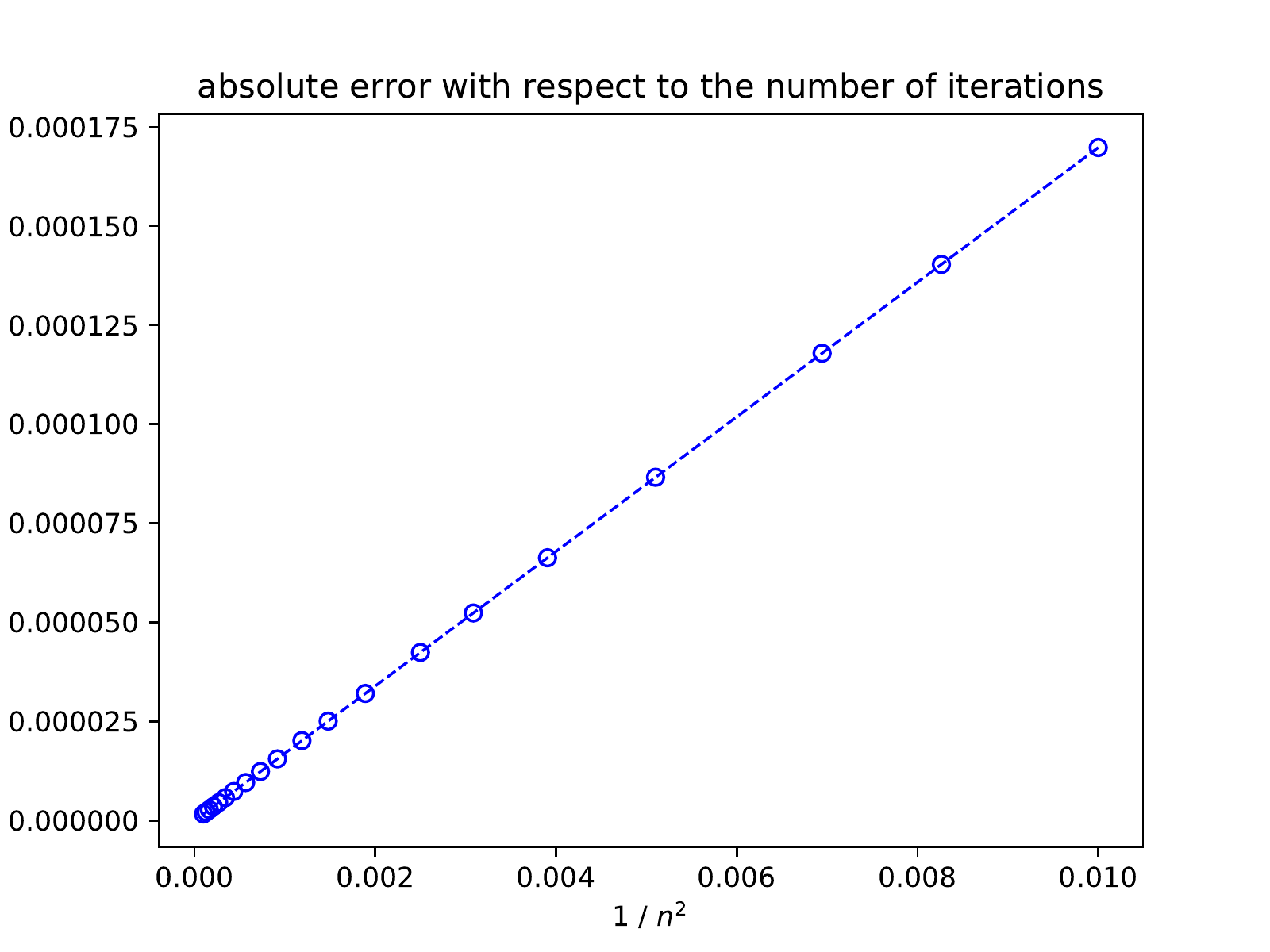}
    \end{minipage}
    \begin{minipage}{.5\textwidth}
        \centering
        \includegraphics[width=0.9\textwidth]{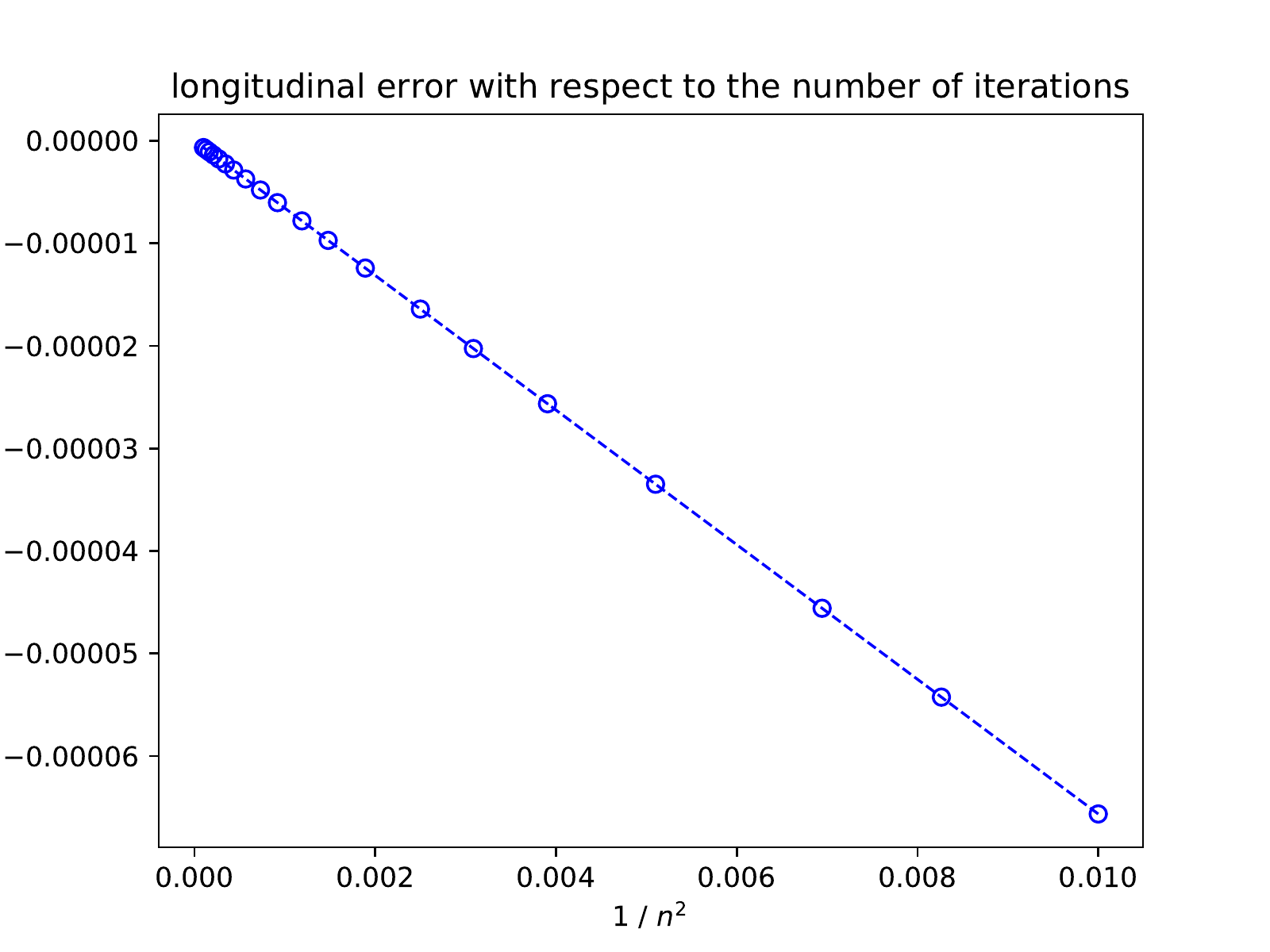}
    \end{minipage}
    \caption{Error of the parallel transport of $v$ along the geodesic with initial velocity $w$ where $v$ and $w$ are orthonormal. Accordingly with our main result, a quadratic speed of convergence $\alpha = 2$ can be reached. Left : Absolute error w.r.t\ $\frac{1}{n^2}$. Right: Longitudinal error w.r.t.\ $\frac{1}{n^2}$: with the AI metric, this is a straight line with negative slope.}
    \label{fig:schild_spd}
\end{figure}

\section{Variations of SL}

We now turn to variations of SL. First, we revisit the Fanning Scheme using the neighboring log and show how close to SL it in fact is. Then we examine the pole ladder, and introduce the averaged SL. Furthermore, we introduce infinitesimal schemes, where the exp and log maps are replaced by one step of a numerical integration scheme. We give convergence results and simulations for the PL in this setting. This allows to probe the convergence bounds in settings where the underlying space is not symmetric, and therefore observe the impact of a non-zero $\nabla R$.

\subsection{The Fanning Scheme, revisited}
The Fanning Scheme \cite{louis_fanning_2018} leverages an identity given in \cite{younes_jacobi_2007} between Jacobi fields and parallel transport. As proved in \cite{louis_fanning_2018}, it converges linearly when dividing the main geodesic into $n$ segments and using second-order integration schemes of the Hamiltonian equations. We recall the algorithm here with our notations and compare it to SL.
For $v,w \in T_x\M$, define the Jacobi field (JF) along $\gamma: t \mapsto \exp_x(tw)$ at time $h$ small enough by:
\begin{equation*}
    J_{\gamma(t)}^v(h) = \frac{\partial}{\partial \epsilon}\bigg|_{\epsilon=0}\exp_{\gamma(t)}\big(h(\dot \gamma(t) + \epsilon v)\big).
\end{equation*}
One can then show that $J_{\gamma(t)}^v(h) = h\Pi_{\gamma,t}^{t+h} v + O(h^2)$\cite{younes_jacobi_2007}.
The Fanning Scheme consists in computing \textit{perturbed} geodesics, i.e.\ with initial velocity $w + \epsilon v$ for some $\epsilon > 0$ and to approximate the associated JF by
\begin{equation}
    \label{eq:jac_finite}
    J_{\gamma(t)}^v(h) = \frac{\exp_{\gamma(t)}\big(h(\dot \gamma(t) + \epsilon v)\big) - \exp_{\gamma(t)}\big(h\dot \gamma(t)\big)}{\epsilon},
\end{equation}
and the parallel transport of $v$ along $\gamma$ between $0$ and $h$ by
\begin{equation*}
    u^w = \frac{\exp_x(h(w + \epsilon v)) - \exp_x(hw)}{h\epsilon}.
\end{equation*}
This defines an elementary construction that is iterated $n$ times with time-steps $h=\frac{1}{n}$, and \cite{louis_fanning_2018} showed that using $\epsilon=h$ yields the desired convergence. 
Let's consider that the finite difference approximation of \eqref{eq:jac_finite} is in fact a first-order approximation of the log of $\exp_x\big(h(w + \epsilon v)\big)$ from $\exp_x(hw)$. Now this scheme is similar to SL, except that it uses $2a = w + \epsilon v$ instead of \eqref{eq:2a}, i.e.\ the correction term according to curvature is not accounted for (compare figures \ref{fig:schild_construction} and \ref{fig:fanning}). When using $h = \epsilon= \frac{1}{n}$, this corresponds to $\alpha=2$ in our analysis of the sequence defined by~\ref{eq:sequence_schild}. Similarly, we can compute a third order Taylor approximation of the error made at each step of the FS. To do so, introduce as in sec.~\ref{sec:taylor_schild} $u = \Pi^x_{x_{hw}}u^w$. Then
\begin{align*}
    u &= l_x\big(hw, h(w + \epsilon v)\big) \\
      &= h\epsilon v + \frac{h^2}{6} R(w, w + \epsilon v)\left(2h (w + \epsilon v) - hw\right) + O(h^4\epsilon)\\
      &= h\epsilon v + \frac{h^3 \epsilon}{6} R(w,v)(w + 2 \epsilon v) + O(h^4\epsilon).
\end{align*}

\begin{figure}
    \centering
    \includegraphics[width=0.45\textwidth]{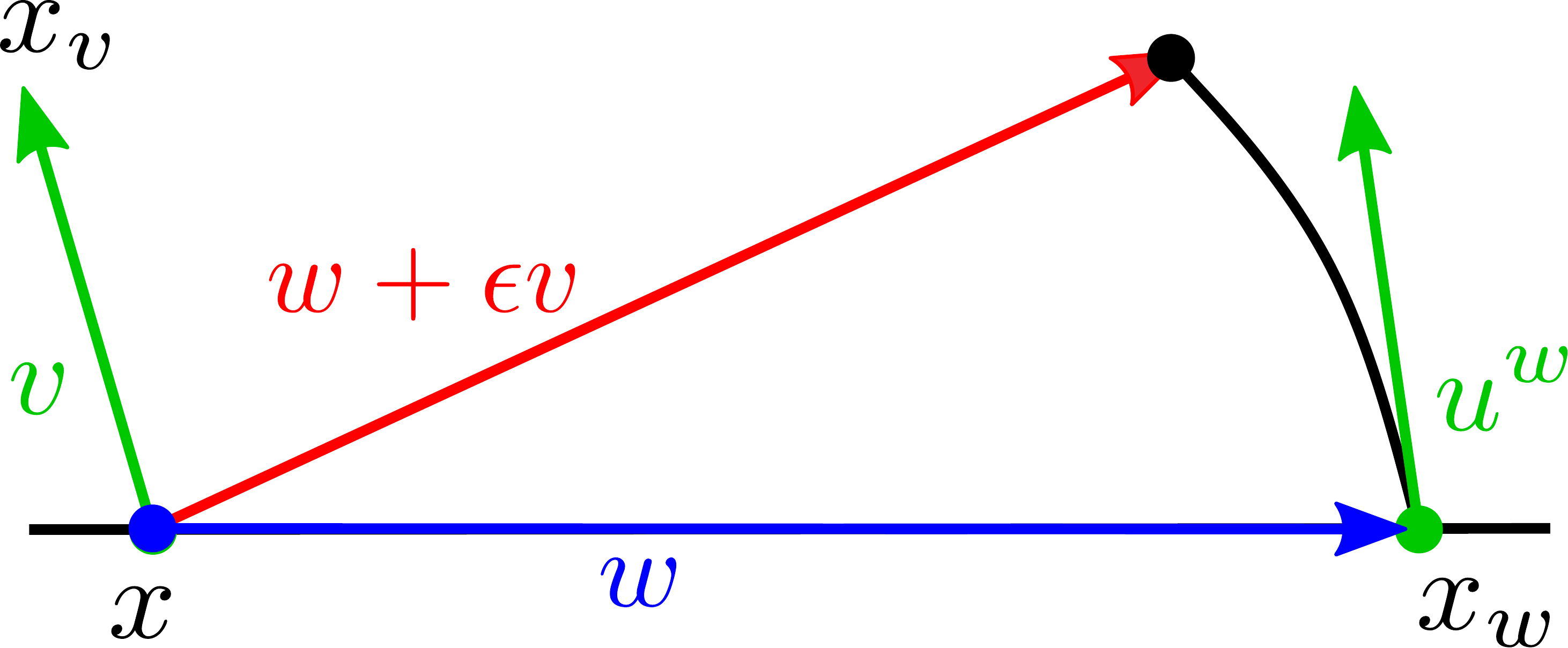}
    \caption{Elementary construction of the Fanning Scheme. For $\epsilon=\frac{1}{n}$, it is similar to that of SL (with $\alpha=2$) except that the midpoint is not computed but $w + \epsilon v$ is used instead of $\log_x(m)$.}
    \label{fig:fanning}
\end{figure}

But as $\epsilon = h$, the last term including $v$ disappears in the $O(h^4\epsilon)$ and we obtain:
\begin{equation}
    \label{eq:taylor_fs}
    \frac{u}{h\epsilon} = v + \frac{h^2}{6} R(w,v)w + O(h^3).
\end{equation}
We notice that unlike the expression given by theorem~\ref{thm:shild}, this approximation of FS contains a term linear in $v$ (i.e.\ where $v$ appears only once), and bilinear in $w$, so that when applied to $h=\frac{1}{n}$, summing error terms yields a global error of order $\frac{1}{n}$. This shows that a better speed cannot be reached. Moreover, we recognize the coefficient $\frac{1}{6n}$ computed explicitly on the sphere in \cite[sec. 2.3]{louis_fanning_2018}.
As in the previous section, define $u^w = \fs(x, w, v)$ the result of the FS construction, and the sequence
\begin{align}
    \label{eq:sequence_fs}
    v_0 &= v, \nonumber \\
    v_{i+1} &= n^2 \cdot \fs\left(x_i, \frac{w_i}{n}, \frac{v_i}{n^2}\right),
\end{align}
where $x_i = \gamma(\frac{i}{n})=\exp_x \big( \frac{i}{n} w \big)$, $w_i = n \log_{x_i}(x_{i + 1}) = \Pi_x^{x_i} w$. Then it is straightforward to reproduce the proof of thm.~\ref{thm:convergence_schild} with \eqref{eq:taylor_fs} instead of \eqref{eq:taylor_schild} to show that $\exists \beta >0, \exists N \in \N, \forall n > N$,
\[\|v_n - \Pi^{x_n}_x v\| \leq \frac{\beta}{n}.\]
This corroborates the result of \cite{louis_fanning_2018}, although at this points geodesics are assumed to be available exactly. An improvement of the FS is to use a second-order approximation of the Jacobi field by computing two perturbed geodesics, but this doesn't change the precision of the approximation of the parallel transport. We implemented this scheme with this improvement and the result is compared with ladder methods in paragraph~\ref{sec:complexity}.

\subsection{Averaged SL}
\label{sec:averaged}
Focusing on Eq.(\ref{eq:taylor_schild}) of Theorem~\ref{thm:shild}, we notice that the error term is even with respect to $v$. We therefore propose to average the results of a step of SL applied to $v$ and to $-v$ to obtain $u^{(+)}$ and $u^{(-)}$. Using \eqref{eq:schild_bis} of appendix~\ref{appendix_taylor_schild}, we have:
\begin{align}
    u^{(+)} &= v + \frac{1}{2} R(w, v)v + \frac{7}{48}\Big((\nabla_v R)(w,v)v + (\nabla_w R)(w,v)(\frac{8}{7}v + w)\Big) + O(5),\\
    u^{(-)} &= -v + \frac{1}{2} R(w, v)v - \frac{7}{48}\Big((\nabla_v R)(w,v)v + (\nabla_w R)(w,v)(w -\frac{8}{7}v)\Big) + O(5).
\end{align}
Hence
\begin{align}
    \frac{u^{(+)} - u^{(-)}}{2} &= v + \frac{7}{48}\Big( (\nabla_v R)(w,v)v + (\nabla_w R)(w,v)w\Big) + O(5).
\end{align}
This scheme thus allows to cancel out the third order term in an elementary construction. However, when iterating this scheme, the dominant term will be $(\nabla_w R)(w,v)w$, so that the speed of convergence remains quadratic anyway. Being also heavier to compute, this scheme has thus very little practical advantage to offer, while the pole ladder, detailed in the next section, present both advantages of being exact at the third order, and being much cheaper to compute.

\subsection{Pole Ladder}
The pole ladder was introduced by \cite{lorenzi_efficient_2014} to reduce the computational burden of SL. They showed that at the first order, the constructions where equivalent. \cite{pennec_parallel_2018} latter gave a Taylor approximation of the elementary construction showing that each rung of pole ladder is a third order approximation of parallel transport, and additionally showed that PL is exact in affine (hence in Riemannian) locally symmetric spaces (the proof is reproduced in \cite{guigui_symmetric_2019}). We first present the scheme and derive the Taylor approximation using the neighboring log.

\subsubsection{The elementary construction}

\begin{figure}
    \centering
    \includegraphics[width=0.7\textwidth]{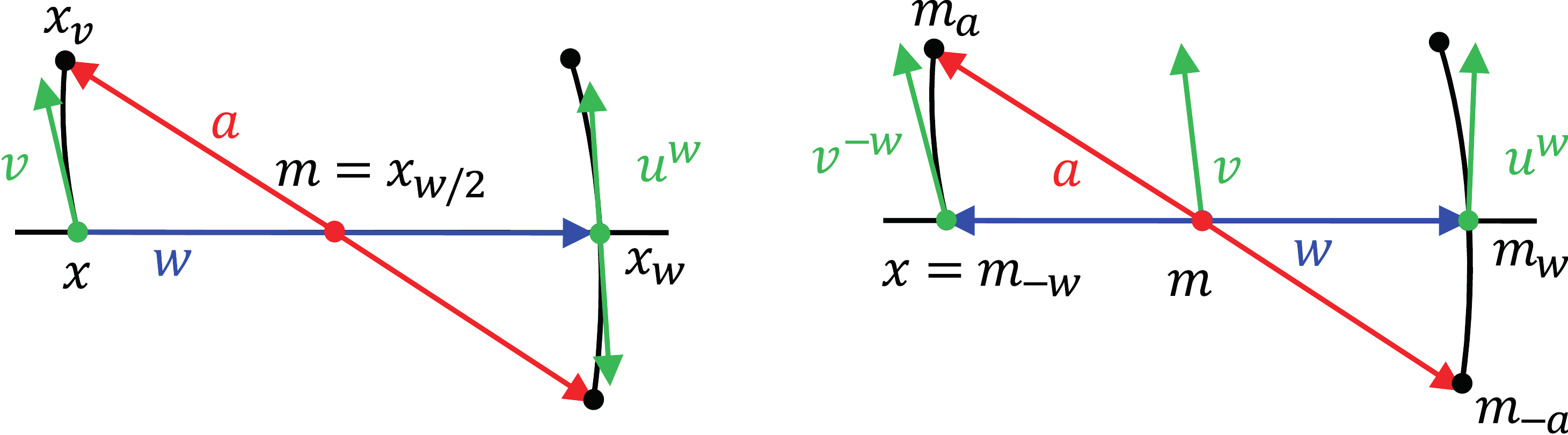}
    \caption{Elementary construction of the pole ladder with the previous notations (left), and new notations (right), in a normal coordinate system at $m$.}
    \label{fig:pole_construction}
\end{figure}

The construction to parallel transport $v \in T_x\M$ along the geodesic $\gamma$ with $\gamma(0) = x$ and $\dot \gamma(0) = w \in T_x\M$ (such that $(v,w) \in U_x$) is given by the following steps (see Fig.~\ref{fig:pole_construction}):
\begin{enumerate}
    \item Compute the geodesics from $x$ with initial velocities $v$ and $w$ until time $s=t=1$ to obtain $x_v$ and $x_w$, and $t=\frac{1}{2}$ to obtain the midpoint $m$. The main geodesic $\gamma$ is a diagonal of the parallelogram.
    \item Compute the geodesic between $x_v$ and $m$, let $a \in T_m\M$ be its initial velocity. Extend it beyond $m$ for the same length as between $x_v$ and $m$ to obtain $z$, i.e.\
    \begin{equation*}
        a = \log_m(x_v); \qquad z = \exp_m(-a) = m_{-a}.
    \end{equation*}
    This is the second diagonal of the parallelogram.
    \item Compute the geodesic between $x_w$ and $z$. The opposite initial velocity $u^w$ is an approximation of the parallel transport of $v$ along the geodesic from $x$ to $x_w$, i.e.\
    \begin{equation*}
        u^w = -\log_{x_w}(m_{-a}).
    \end{equation*}
\end{enumerate}
 By assuming that there exists a convex neighborhood that contains the entire parallelogram, all the above operations are well defined.

\subsubsection{Taylor approximation}
We now introduce new notations, and centre the construction at the midpoint $m$ (see Fig.~\ref{fig:pole_construction}, right). Both $v,w$ are now tangent vectors at $m$, and $v$ is the parallel transport of the vectors $v^{-w}$ that we wish to transport between $m_{-w}$ and $m_w$. With these new notations,
\begin{equation*}
    u = \Pi_{m_w}^m u^w = - l_m(w, -a)
\end{equation*}
where $a=h_m(-w, v)$. Therefore, using \eqref{eq:taylor_double_exp} and \eqref{eq:taylor_double_log}, we obtain the following (see appendix~\ref{appendix_taylor_pole} for the computations)

\begin{theorem}
\label{thm:pole}
Let $(\M, g)$ be a finite dimensional Riemannian manifold. Let $m\in \M$ and $v,w \in T_m\M$ sufficiently small. Then the output $u$ of one step of the pole ladder parallel transported back to $m$ is given by
\begin{equation}
    u = v + \frac{1}{12}\big( (\nabla_{w} R)(w,v)(5v -w) + (\nabla_{v} R)(w,v)(2v - w) \big) + O(5). \label{eq:taylor_pole}
\end{equation}
\end{theorem}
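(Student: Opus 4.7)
The plan is to substitute the two Taylor expansions \eqref{eq:taylor_double_exp} and \eqref{eq:taylor_double_log} into the identity $u = -l_m(w, -a)$ with $a = h_m(-w, v)$, and then track which terms cancel by the pole-ladder symmetry and which survive. Everything is mechanical once the right substitutions are made; the only nontrivial phenomenon is a cancellation of the second-order curvature terms, which is precisely the algebraic shadow of the fact that PL is exact on symmetric spaces.

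First, I would expand $a$ by applying \eqref{eq:taylor_double_exp} with $(sv, tw) \mapsto (-w, v)$ and $s=t=1$. Using the multilinearity of $R$ and of $\nabla R$, this yields
\begin{equation*}
a \;=\; v - w - \frac{1}{6} R(v,w)(2v - w) + Q_3(v,w) + O(5),
\end{equation*}
where $Q_3$ collects the $\nabla R$-contributions read off directly from \eqref{eq:taylor_double_exp}. Negating,
\begin{equation*}
-a \;=\; (w - v) + \frac{1}{6} R(v,w)(2v - w) - Q_3(v,w) + O(5) \;=\; (w - v) + \rho,
\end{equation*}
with $\rho = O(3)$.

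Next I would plug this into $l_m(w, -a)$ using \eqref{eq:taylor_double_log} with the substitution $(v,w) \mapsto (w, -a)$. The linear part produces $-a - w = -v + \rho$. The quadratic curvature term $\frac{1}{6}R(w, -a)(-2a - w)$ need only be evaluated with $-a$ replaced by its leading term $w - v$, since the $\rho$-correction would push it to order five. A short bilinear computation gives
\begin{equation*}
\frac{1}{6} R(w, w-v)\bigl(2(w-v) - w\bigr) \;=\; \frac{1}{6} R(w, -v)(w - 2v) \;=\; -\frac{1}{6} R(v,w)(2v - w),
\end{equation*}
which is exactly the opposite of the $R(v,w)(2v - w)$ piece sitting inside $\rho$. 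Thus \emph{all} second-order terms in the curvature disappear, and $l_m(w,-a) = -v + (\text{cubic } \nabla R\text{-terms}) + O(5)$.

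It remains to collect the surviving cubic contributions: (i) the $Q_3$ part inherited from $\rho$; and (ii) the cubic $\nabla R$-terms of \eqref{eq:taylor_double_log} evaluated at the leading-order substitution $-a \mapsto w - v$. Both are obtained by direct substitution and expansion using only the linearity of $(\nabla_\cdot R)(\cdot,\cdot)\cdot$ in each slot. Finally one negates to obtain $u = -l_m(w, -a)$. The main obstacle is the bookkeeping of the resulting half-dozen $\nabla R$-terms: one has to organise them by which vector plays the role of the derivation and which vectors sit inside $R$, use $(\nabla_X R)(Y, Z) = -(\nabla_X R)(Z, Y)$ and analogous multilinearity identities, and verify that the coefficients collapse to $\tfrac{1}{12}(\nabla_v R)(w,v)(2v - w)$ and $\tfrac{1}{12}(\nabla_w R)(w,v)(5v - w)$ as claimed. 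There are no further structural surprises beyond the second-order cancellation already exhibited.
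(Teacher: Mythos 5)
Your proposal is correct and follows essentially the same route as the paper's Appendix~\ref{appendix_taylor_pole}: expand $a=h_m(-w,v)$ via \eqref{eq:taylor_double_exp}, substitute into $u=-l_m(w,-a)$ via \eqref{eq:taylor_double_log}, replace $-a$ by its leading term $w-v$ inside all curvature terms, and observe the cancellation of the $R(v,w)(2v-w)$ contributions, leaving only the $\nabla R$ terms. The final coefficient bookkeeping you leave implicit is exactly the computation the paper carries out, so there is no gap of substance.
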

We notice that an elementary construction of pole ladder is more precise than that of Schild's ladder in the sense that there is no third order term $R(\cdot, \cdot)\cdot$. Using $n^{-1}$ to scale $w$ and $n^{-\alpha}$ to scale $v$ as in SL, the term $(\nabla_{w} R)(w,v)w$ is the dominant term when iterating this construction. Indeed, as it is linear in $v$, the scaling does not influence the convergence speed as one needs to multiply the result by $n^{\alpha}$ at the final rung of the ladder to recover the parallel transport of $v$. The other terms are multilinear in $v$, so if $\alpha >1$, there are small compared to $\frac{1}{n^3}(\nabla_{w} R)(w,v)w$. Summing $n$ terms of the form $\frac{1}{n^3}(\nabla_{w} R)(w,v)w$ we thus obtain a quadratic speed of convergence, as for Schild's ladder, for any $\alpha \geq 1$. We henceforth use $\alpha=1$. 

As in Sec.~\ref{sec:averaged}, one could think of applying pole ladder to linear combinations of $v,w$ to achieve a better performance, but this strategy is doomed as no linear combination can cancel out the dominant term $(\nabla_{w} R)(w,v)w$.
\begin{figure}[t]
    \centering
    \includegraphics[width=0.7\textwidth]{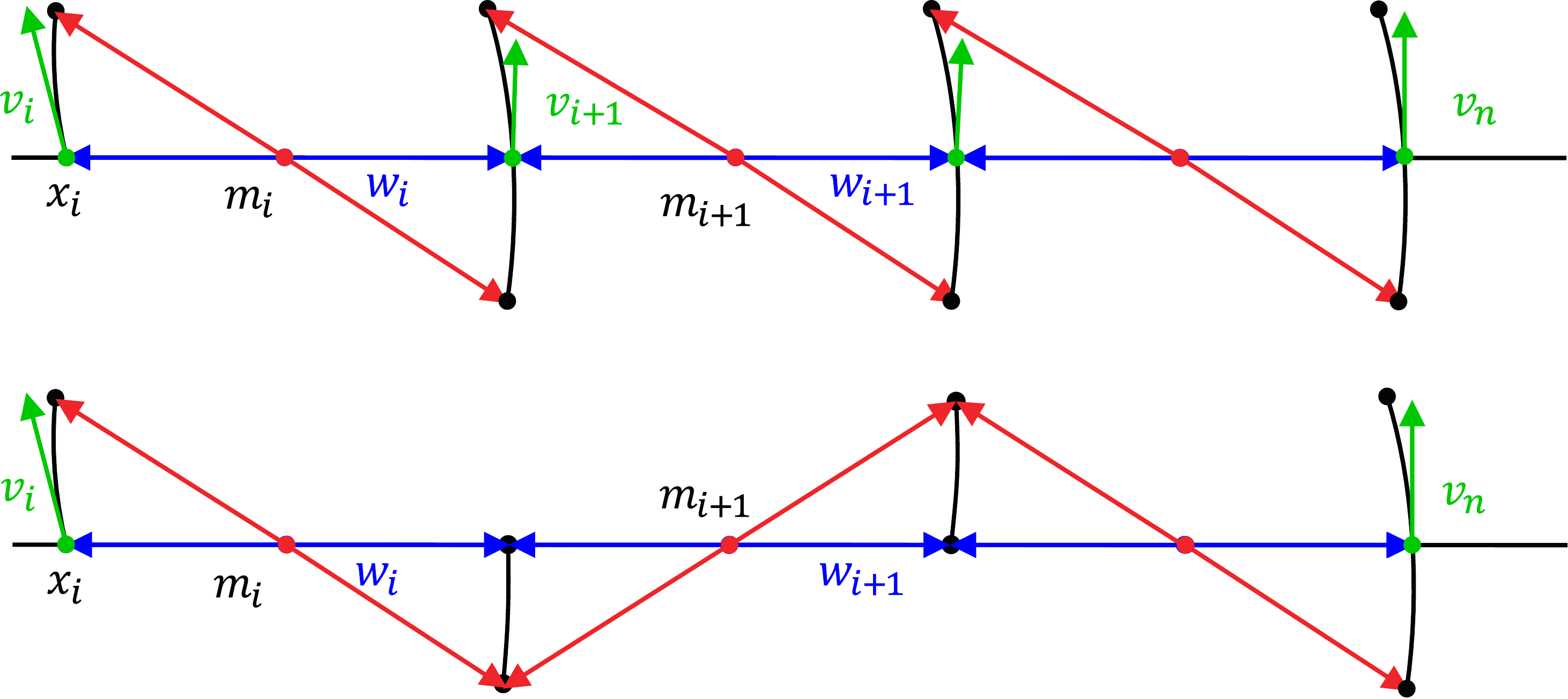}
    \caption{The pole ladder, consisting in iterations of elementary constructions (top). It can be simplified to lighten the computational burden of computing geodesics (bottom), only one log and one exp need to be computed from $m_i$ at each iteration.}
    \label{fig:pole_iterated}
\end{figure}
As in the previous section, define $u^w = \pole(m, w, v)$ the result of the PL construction (with the new notations), and the sequence
\begin{align}
    \label{eq:sequence_pole}
    v_0 &= v \nonumber \\
    v_{i+1} &= n \cdot \pole(m_i, \frac{w_i}{n}, \frac{v_i}{n}),
\end{align}
where $m_i = \gamma(\frac{2i + 1}{2n})=\exp_m \big( \frac{2i}{n} w \big)$, $w_i = \dot \gamma(\frac{2i + 1}{2n})= \frac{n}{2} \log_{m_i}(m_{i + 1}) = \Pi_m^{m_i} w$. Then it is straightforward to reproduce the proof of thm.~\ref{thm:convergence_schild} with \eqref{eq:taylor_pole} instead of \eqref{eq:taylor_schild} to show that $\exists \beta >0, \exists N \in \N, \forall n > N$,
\begin{equation}
    \label{eq:pole_convergence}
    \|v_n - \Pi^{m_n}_m v\| \leq \frac{\beta}{n^2},
\end{equation}
and $\beta$ can be bounded by the covariant derivative of the curvature.

This proves the convergence of the scheme with the same speed as SL. Note however that when iterating the scheme, unlike SL, the $v_i$'s and the geodesics that correspond to the rungs of the ladder need not being computed explicitly, except at the last step. This is shown Fig.\ref{fig:pole_iterated}, bottom row. This greatly reduces the computational burden of the PL over SL.

\subsection{Infinitesimal Schemes with Geodesic Approximations}
When geodesics are not available in closed form, we replace the exp map by a fourth-order numerical scheme (e.g. Runge-Kutta (RK)), and the log is obtained by gradient descent over the initial condition of exp. It turns out that only one step of the numerical scheme is sufficient to ensure convergence, and keeps the computational complexity reasonable. As only one step of the integration schemes is performed, we are no longer computing geodesic parallelograms, but infinitesimal ones, and thus refer to this variant as \textit{infinitesimal scheme}. We here detail the proof for the PL, but it could be applied to SL as well as the other variants.

More precisely, consider the geodesic equation written as a first order equation of two variables in a global chart $\Phi$ of $M$, that defines for any $p \in M$ a basis of $T_pM$, written $B^\Phi_x = \frac{\partial}{\partial x^i}\bigg|_x, \; i=1,\ldots,d$:

\begin{equation}
\label{eq:geo-flow}
  \left\{
    \begin{aligned}
      \dot x^k(t) &= v^k(t),\\
      \dot v^k(t) &= - \Gamma_{ij}^k v^i(t) v^j(t),
    \end{aligned}
  \right.
\end{equation}

where $\Gamma_{ij}^k$ are the Christoffel symbols, and we use Einstein summation convention. Let $rk: T\M \times \R_+ \rightarrow T\M$ be the map that performs one step of a fourth-order numerical scheme (e.g. RK4), i.e.\ it takes as input $(x(t), v(t)), h)$ and returns an approximation of $(x(t+h), v(t+h))$ when $(x,v)$ is solution of the system~\eqref{eq:geo-flow}. In our case, the step-size $h=\frac{1}{n}$ is used. By fourth-order, we mean that we have the following local \textit{truncation} error relative to the 2-norm of the global coordinate chart $\Phi$:
\begin{equation*}
    \| x(t+h) - rk_1(x(t), v(t), h) \|_2 \leq \tau_{1}h^5,
\end{equation*}
and global accumulated error after $n$ steps with step size $h=\frac{1}{n}$:
\begin{equation*}
    \| x(1) - \tilde x_n\|_2 \leq \frac{\tau_{1}}{n^4},
\end{equation*}
where by $rk_1$ we mean the projection on the first variable, and $\tau_{1}>0$ is a constant.

This means that any point on the geodesic is approximated with error $O(\frac{1}{n^4})$. As we are working in a compact domain, and all the norms are equivalent, the previous bounds can be expressed in Riemannian distance $d$:
\begin{align}
 d\big( x(t+h), rk_1(x(t), v(t), h) \big) &\leq \tau_{2}h^5    \label{eq:local-trunc} \\
 d\big( x(1), \tilde{x_n}\big) &\leq \frac{\tau_2}{n^4}.   \label{eq:global-trunc}
\end{align}

Furthermore, the inverse of $v\mapsto rk_1(x,v,h)$ can be computed for any $x$ by gradient descent, and we assume\footnote{see remark \ref{rk:convergence_log} thereafter about the validity of this hypothesis} that any desired accuracy can be reached. We write $\tilde v=rk_x^{-1}(y)$ for the optimal $v$ such that $d(rk(x,v, \frac{1}{n}), y) = o(\frac{1}{n^5})$, and assume that $\|rk_x^{-1}(y) - \log_x(y)\| \leq \frac{\tau_3}{n^4}$. Note that the step size does not appear explicitly in the notation $rk^{-1}$, but $h=\frac{1}{n}$ is always used. As in practice, $x\in \M, v,w\in T_x\M$ are given, the initial $w_0$ is tangent at $x$, and then for $i>0$, $w_i$ will be tangent at $m_i$ as in \eqref{eq:sequence_pole}, but $w_i$ maps $m_i$ to $m_{i+1}$ and this must correspond to one step of size $\frac{1}{n}$ in the discrete scheme, so there is a factor two that differs from the definition of $w_i$ in \eqref{eq:sequence_pole}. Now define also $\tilde m_i, \tilde w_i, \tilde v_i$, using $rk$ instead of exp and log, that is (see figure \ref{fig:infinitesimal_pole}):
\begin{align*}
    \label{eq:sequence_inf_pole}
    m_0 &= \exp_x(\frac{w}{2n})
    &\qquad
    z_0 &= \exp_x(\frac{v}{n})\\
    m_{i+1} &= \exp_{m_i}(\frac{w_i}{n})
    &\qquad
    z_{i+1} &= \exp_{m_i}(-\log_{m_i}(z_i))
\\
    \tilde m_0, \frac{\tilde w_0}{2} &= rk(x, \frac{w}{2}, \frac{1}{n})
    &\qquad
    \tilde z_0 &= rk_1(x, v, \frac{1}{n}) \\
    \tilde m_{i+1}, \tilde w_{i+1} &= rk(\tilde m_i, \tilde w_i, \frac{1}{n})
    &\qquad
    \tilde z_{i+1} &= rk_1(\tilde m_i, -rk^{-1}_{\tilde m_i}(\tilde z_i), \frac{1}{n} ).
\end{align*}
Finally let $x_n = \exp(w), v_n = (-1)^n n \log_{x_n}(z_n)$ and their approximations $\tilde x_n = rk(\tilde m_n, \frac{\tilde w_n}{2},\frac{1}{n})$ and $\tilde v_n = n \cdot rk^{-1}_{\tilde x_n}(\tilde z_n)$.

\begin{figure}
    \centering
    \includegraphics[width=0.5\textwidth]{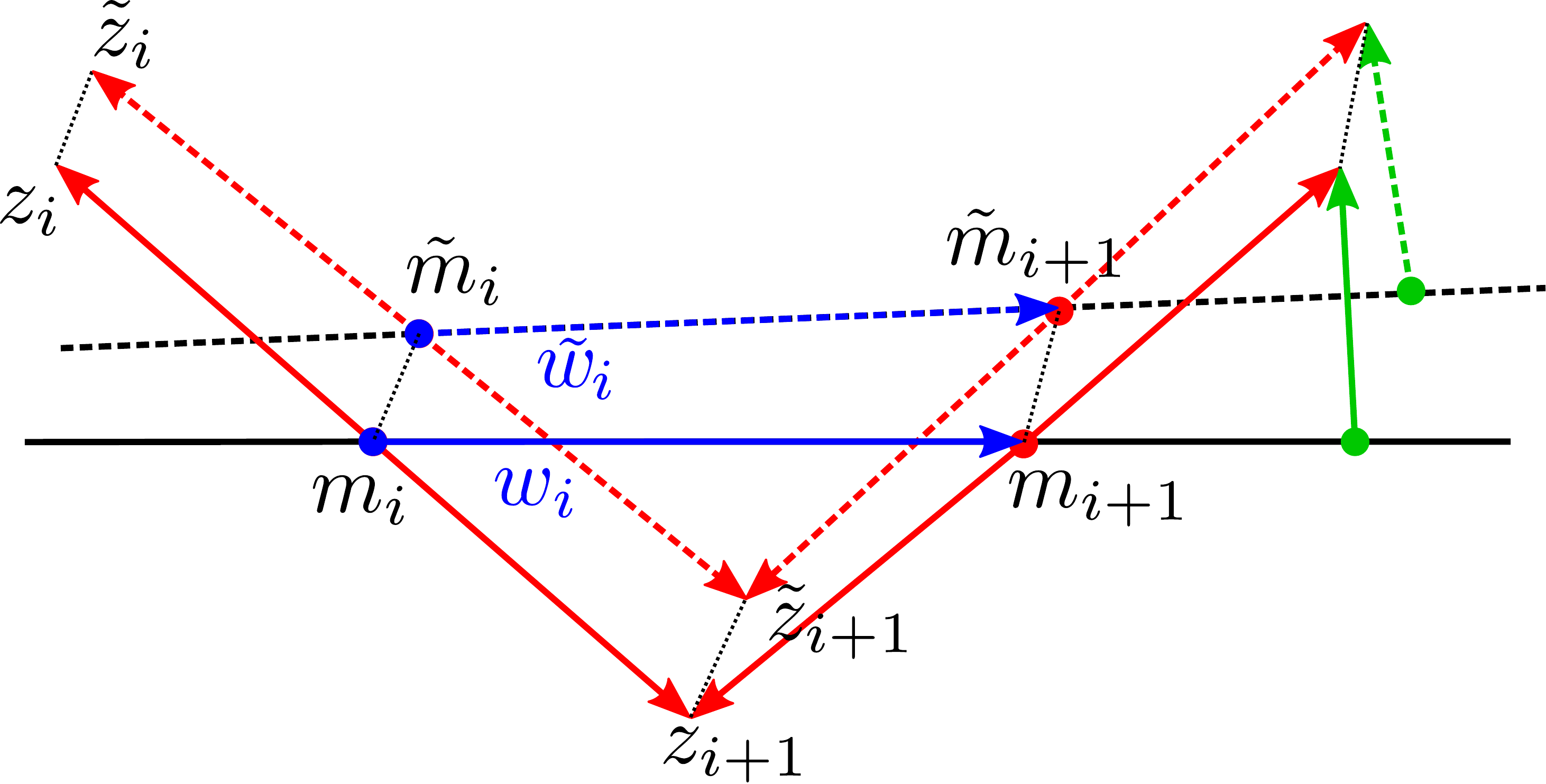}
    \caption{Representation of one iteration of the infinitesimal PL scheme. The exact geodesics and logs are plain lines, while their approximations with a numerical scheme are dashed.}
    \label{fig:infinitesimal_pole}
\end{figure}
We will prove that this approximate sequence converges to the true parallel transport of $v$:

\begin{theorem}
Let $(\tilde v_n)_n$ be the sequence defined above, corresponding to the result of the pole ladder with approximate geodesics computed by a fourth-order method in a global chart. Then
\begin{equation*}
    \| \Pi_x^{x_n} v - \tilde v_n\| = O(\frac{1}{n^2}).
\end{equation*}
\end{theorem}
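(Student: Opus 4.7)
The plan is to decompose the error via the triangle inequality into the ``ideal'' pole-ladder error and the numerical approximation error:
\[
\|\Pi_x^{x_n} v - \tilde v_n\| \leq \|\Pi_x^{x_n} v - v_n\| + \|v_n - \tilde v_n\|.
\]
The first term is $O(n^{-2})$ by the already-established convergence result \eqref{eq:pole_convergence}. All the new work lies in bounding the second term, which measures how much the replacement of $\exp$ and $\log$ by the one-step Runge--Kutta scheme and its numerical inverse degrades the final output.

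To bound $\|v_n - \tilde v_n\|$, I would proceed by induction on $i$ to jointly control the three drifts $d(m_i, \tilde m_i)$, the chart-norm difference $\|w_i - \tilde w_i\|$, and $d(z_i, \tilde z_i)$. Each rung introduces three fresh sources of error: a local truncation error $\tau_2 n^{-5}$ from the forward RK step producing $(\tilde m_{i+1}, \tilde w_{i+1})$ by \eqref{eq:local-trunc}; an error $\tau_3 n^{-4}$ when replacing $\log_{m_i}(z_i)$ by $rk^{-1}_{\tilde m_i}(\tilde z_i)$; and another local RK error $\tau_2 n^{-5}$ for the reflection step producing $\tilde z_{i+1}$. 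These contributions are then advected by one more application of the geodesic flow, whose Jacobian with respect to initial data is uniformly bounded on our compact working set. A standard discrete Grönwall / stability argument then shows that the drifts grow at most linearly in $i$, yielding uniformly for $i \le n$:
\[
d(m_i, \tilde m_i) + d(z_i, \tilde z_i) = O(n^{-4}).
\]

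The final step is the comparison of $v_n = (-1)^n n \log_{x_n}(z_n)$ with $\tilde v_n = n\, rk^{-1}_{\tilde x_n}(\tilde z_n)$. Using that $\log$ is Lipschitz jointly in its two arguments (with a constant depending only on curvature bounds on the compact set), together with the accuracy assumption $\|rk^{-1}_{\tilde x_n}(\tilde z_n) - \log_{\tilde x_n}(\tilde z_n)\| \le \tau_3 n^{-4}$, the previous drift bound yields $\|\log_{x_n}(z_n) - rk^{-1}_{\tilde x_n}(\tilde z_n)\| = O(n^{-4})$. Multiplying by $n$ gives $\|v_n - \tilde v_n\| = O(n^{-3})$, which is dominated by the ideal pole-ladder error $O(n^{-2})$; combining the two contributions concludes.

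The main obstacle will be the intermediate step in which, at each rung, one must invert a numerical flow map by gradient descent and then reuse the result as tangent data for the next step: this inversion error couples the two recursions $(m_i)$ and $(z_i)$. One must argue that the $\tilde z_i$ remain within a convex neighborhood of $\tilde m_i$ on which $rk^{-1}$ is well-defined and has uniformly bounded Lipschitz constant, which requires an a priori boundedness statement for the $\tilde v_i$ analogous to the bootstrap $\|v_i\| \le 2\|v\|$ used in the proof of Theorem~\ref{thm:convergence_schild}. Once this bootstrap is in place, the remainder of the argument is a routine discrete Grönwall estimate.
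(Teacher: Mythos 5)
Your decomposition and overall strategy coincide with the paper's: triangle inequality splitting off the ideal pole-ladder error \eqref{eq:pole_convergence}, then an induction on the rung index controlling the drift between the exact and numerically integrated constructions, using Lipschitz-type continuity of $\exp$ and $\log$ in both arguments (the paper isolates these as lemmas~\ref{lemma:exp-smoothness} and \ref{lemma:log-continuity}) and a bootstrap ensuring the iterates stay in a convex neighborhood where everything is well defined. You also correctly identify the bootstrap $\|v_i\|\leq 2\|v\|$ as the ingredient needed to keep $z_i$ within range of $m_i$, which is exactly how the paper closes the induction.

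There is, however, a quantitative slip in your accumulation step. You list the per-rung error sources correctly --- two local RK truncations of size $\tau_2 n^{-5}$ and one inversion error of size $\tau_3 n^{-4}$ --- and you state that the drifts grow at most linearly in $i$; but linear growth with a per-step increment of order $n^{-4}$ gives, after $n$ rungs, a drift $d(z_n,\tilde z_n)=O(n^{-3})$, not the uniform $O(n^{-4})$ you claim. (Only $d(m_i,\tilde m_i)$ is $O(n^{-4})$, and that comes for free from the global truncation bound \eqref{eq:global-trunc} on the main geodesic, not from a Gr\"onwall argument.) Consequently your final estimate $\|v_n-\tilde v_n\|=O(n^{-3})$ is too optimistic; the correct bound after multiplying the log discrepancy by $n$ is $\|v_n-\tilde v_n\|=O(n^{-2})$, which is what the paper obtains. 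The theorem is unaffected --- $O(n^{-2})$ matches rather than is dominated by the ideal pole-ladder error --- but you should correct the exponent: the numerical-integration error is of the \emph{same} order as the geometric discretization error, not negligible relative to it, and this is precisely why a fourth-order integrator (rather than a lower-order one) is required in the hypotheses.
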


\begin{proof}
It suffices to show that there exists $\beta'>0$ such that for $n$ large enough  $\| \tilde v_n - v_n\| \leq \frac{\beta'}{n^2}$. Indeed, by \eqref{eq:pole_convergence}, for $n$ large enough:
\begin{equation*}
    \label{eq:result}
    \| \Pi_x^{x_n} v - \tilde v_n\| \leq \| \Pi_x^{x_n} v - v_n\| + \|\tilde v_n - v_n\| \leq \frac{\beta + \beta'}{n^2}.
\end{equation*}
The approximations made when computing the geodesics with a numerical scheme accumulate at three steps: (1) the RK scheme compared to the true geodesic, this is controlled with the above hypotheses, (2) the distance between the results of the exp map when both the footpoint and the input vector vary a little, this is handled in lemma~\ref{lemma:exp-smoothness} below, and (3) the difference between the results of the log map when both the foot-point and the input vary a little, this is similar to (2) and is handled in lemma~\ref{lemma:log-continuity}. See figure~\ref{fig:lemma_exp_log} for a visual intuition of those lemmas.

\begin{lemma}
\label{lemma:exp-smoothness}
$\forall x, \tilde x \in M$ such that $x$ and $\tilde x$ are close enough, for all $v \in T_xM, \tilde v \in T_{\tilde v}M$ such that both $v$ and $\delta v = \Pi_{\tilde{x}}^x \tilde v - v$ are small enough, we have:
\begin{equation*}
    d \big( \exp_x(v), \exp_{\tilde x}(\tilde v) \big) \leq d(x, \tilde x) + \|  \Pi_{\tilde{x}}^x \tilde v - v \|.
\end{equation*}
\end{lemma}

\begin{proof}
Let $\delta x = \log_x(\tilde x)$, $x_v = \exp_x(v)$ and $\tilde{x}_{\tilde{v}} =  \exp_{\tilde x}(\tilde v)$. By the definition of the double exp and $\delta v$,  $\tilde{x}_{\tilde{v}} = \exp_x(h_x(\delta x, v + \delta v))$. Then by the definition of the neighboring log, we have
\begin{equation}
	\log_{x_v}(\tilde{x}_{\tilde{v}}) = \log_{x_v}(\exp_x(h_x(\delta x, v + \delta v)))= \Pi^{x_v}_x l_x(v, h_x(\delta_x, v + \delta v)).
\end{equation}
Using the Taylor approximations \eqref{eq:taylor_double_exp} and \eqref{eq:taylor_double_log} truncated at the order of $\|v\|$, we obtain
\begin{equation*}
	\Pi_{x_v}^x \log_{x_v}(\tilde{x}_{\tilde{v}}) = l_x(v, \delta x + v+ \delta v) = \delta x + \delta v.
\end{equation*}
So that the Riemannian distance $d$ between $x_v$ and $\tilde{x}_{\tilde{v}}$ is
\begin{align*}
	d = \| \log_{x_v}(\tilde{x}_{\tilde{v}}) \| \leq \|\delta x\| + \|\delta v\|,
\end{align*}
and the result follows.
\qed
\end{proof}

\begin{figure}
    \centering
    \includegraphics[width=0.4\textwidth]{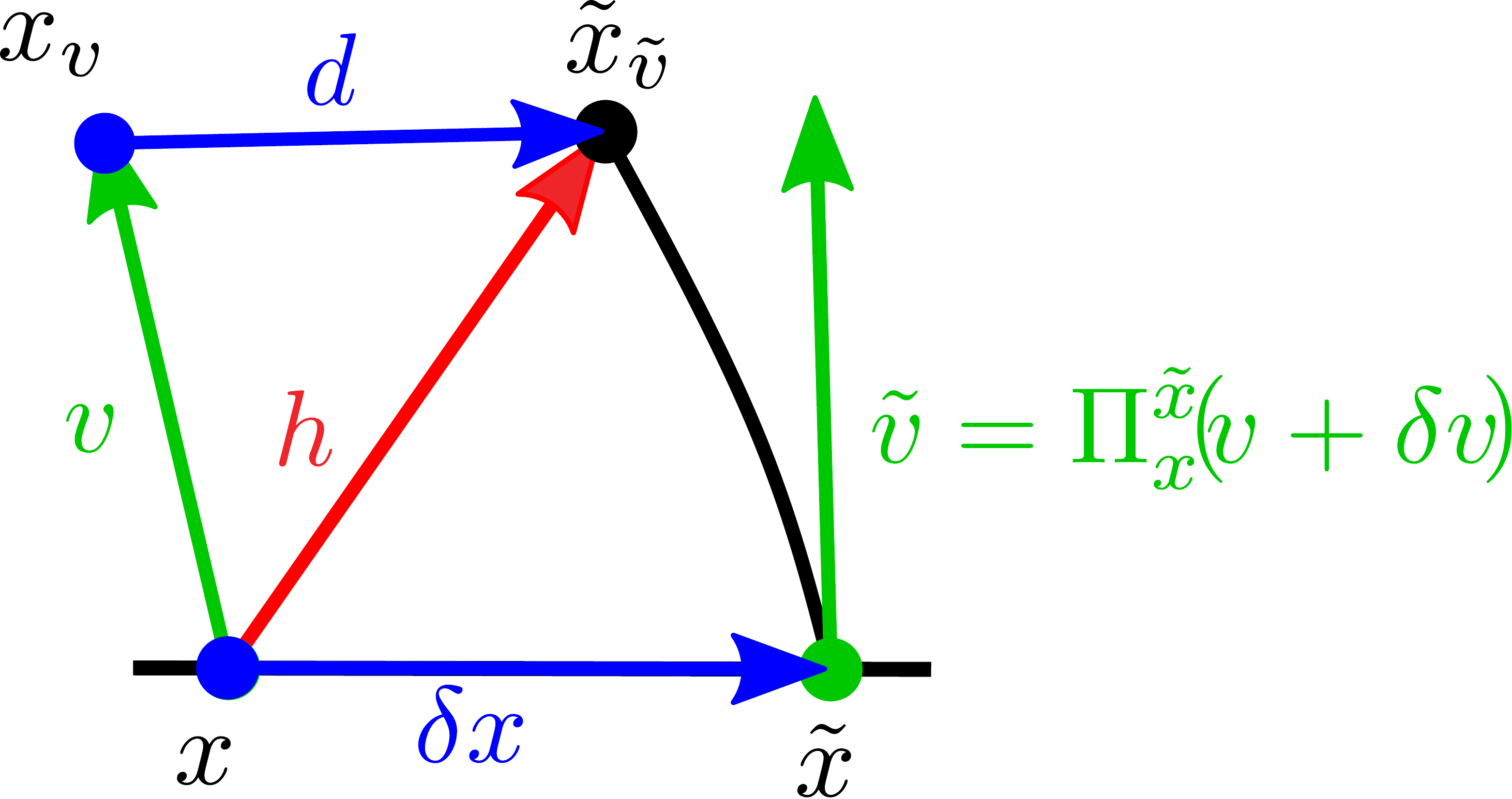}
    \caption{Visualization for the two lemmas: lemma~\ref{lemma:exp-smoothness} seeks to bound the norm of $d$ given $\delta x$ and $\delta v$ while lemma~\ref{lemma:log-continuity} bounds the norm of $\delta v$ given $d$ and $\delta x$. Here $h$ is the shorthand for $h_x(\delta x, v + \delta v)$ and $d$ for $\log_{x_v}(\tilde x_{\tilde v})$.}
    \label{fig:lemma_exp_log}
\end{figure}

\begin{lemma}
\label{lemma:log-continuity}
$\forall x, \tilde x, z \in M$ close enough to one another, we have in the metric norm
\begin{equation}
    \|\log_x(z) -\Pi_{\tilde{x}}^x \log_{\tilde x}(\tilde{z}) \| \leq d(x, \tilde{x}) + d(z, \tilde{z}).
\end{equation}
\end{lemma}

\begin{proof}
The proof is similar to that of lemma~\ref{lemma:exp-smoothness} except that this time it is the norm of $\|\delta v\|$ that needs to be bounded by $d(x_v, \tilde{x}_{\tilde{v}})$.
\qed
\end{proof}

Now, we first show that the sequence $\big(\delta_i = d(\tilde z_{i}, z_{i})\big)_i$ verifies an inductive relation, such that it is bounded by $\frac{1}{n^3}$, and then we will use lemma~\ref{lemma:log-continuity} to conclude. We first write
\begin{align*}
\| \log_{m_i}(z_i) - rk^{-1}_{\tilde m_i}(\tilde z_i)\| \leq \|\log_{m_i}(z_i) - \log_{\tilde m_i}(\tilde z_i)\| + \|\log_{\tilde m_i}(\tilde z_i) - rk^{-1}_{\tilde m_i}(\tilde z_i)\|.
\end{align*}
The second term on the r.h.s.\ corresponds to the approximation of the log by gradient descent and is bounded by hypothesis. For the first term, $d(m_i, \tilde{m_i})\leq \frac{\tau_2}{n^4}$ by hypothesis on the scheme \eqref{eq:global-trunc}. Suppose for a proof by induction on $i$ that $\delta_i = d(\tilde z_i, z_i) \leq \frac{\tau_2}{n}$ and $d(m_i, z_i)\leq \frac{\|w\| + 2\|v\|}{n}$. This is verified for $i=0$ and allows to apply lemma~\ref{lemma:log-continuity} for $n$ large enough, so
\begin{equation}
\| \log_{m_i}(z_i) - rk^{-1}_{\tilde m_i}(\tilde z_i)\| \leq d(\tilde m_i, m_i) + \delta_i + \frac{\tau_3}{n^4}.
\end{equation}
Furthermore, by lemma~\ref{lemma:exp-smoothness} applied to $v=-\log_{m_i}(z_i)$ and $\tilde{v} = -rk^{-1}_{\tilde m_i}(\tilde z_i)$, which are sufficiently small by the induction hypothesis,
\begin{align*}
\delta_{i+1} &= d(z_{i+1}, \tilde z_{i+1}) = d\big( rk_1(\tilde m_i, -rk^{-1}_{\tilde m_i}(\tilde z_i), \frac{1}{n}),  \exp_{m_i}(-\log_{m_i}(z_i))\big)\\
		&\leq d\big( rk_1(\tilde m_i, -rk^{-1}_{\tilde m_i}(\tilde z_i), \frac{1}{n}),  \exp_{\tilde m_i}(-rk^{-1}_{\tilde m_i}(\tilde z_i))\big) \\
        	&\qquad\quad + d\big( \exp_{\tilde m_i}(-rk^{-1}_{\tilde m_i}(\tilde z_i)), \exp_{m_i}(-\log_{m_i}(z_i))\big) \\
    		&\leq \frac{\tau_2}{n^5} +  d(m_i, \tilde m_i) + \| \log_{m_i}(z_i) - rk^{-1}_{\tilde m_i}(\tilde z_i)\|.
\end{align*}
And combining the two results, we obtain $\delta_{i+1} \leq \frac{(2n+1)\tau_2 + n\tau_3}{n^5} + \delta_i $, which completes the induction for $\delta_i$. For $d(z_i, m_i)$ we have:
\begin{equation}
d(z_{i+1}, m_{i+1}) = d(z_{i}, m_{i+1}) \leq d(x_i, z_i) + d(x_i, m_i) = \|\frac{v_i}{n}\| + \|\frac{w_i}{n}\|.
\end{equation}
In the section on SL, we proved that $\|v_i\|\leq 2\|v\|$ for $n$ large enough. This applies here as well, and the fact that $w_i$ is the parallel transport of $w$ completes the proof by induction.

By summing the terms for $i=0,\ldots,n-1$, and using $\delta_0\leq \frac{\tau_2}{n^5}$ by \eqref{eq:local-trunc}, we thus have $\delta_n \leq \frac{\tau_3 + 2\tau_2}{n^3} + \frac{n + 1}{n^5}\tau_2$.
We finally apply lemma~\ref{lemma:log-continuity} to the scaled $v_n, \tilde v_n$, as $x_n$ and $\tilde x_n$ are close enough:
\begin{align*}
 \frac{1}{n}\|v_n - \tilde v_n\| &= \|\log_{x_n}(z_n) - rk^{-1}_{\tilde x_n}(\tilde z_n) \| \\
 			&\leq \|\log_{x_n}(z_n) - \log_{\tilde x_n}(\tilde z_n)\| + \| \log_{\tilde x_n}(\tilde z_n) - rk^{-1}_{\tilde x_n}(\tilde z_n)\|\\
 			&\leq d(x_n, \tilde x_n) + \delta_n + \frac{\tau_2}{n^5}.
\end{align*}
So that for $n$ large enough $\|\tilde v_n - v_n\| \leq \frac{\beta'}{n^2}$ for some $\beta'>0$.
\qed
\end{proof}

\begin{remark}
\label{rk:convergence_log}
To justify the hypothesis on $rk^{-1}$, namely, $\|rk^{-1}_x(y) - \log_x(y)\|_2 \leq \frac{\tau}{n^5}$, we consider the problem \eqref{opt-P} which corresponds to an energy minimization. Working in a convex neighborhood, it admits a unique minimizer $v^*$:
\begin{equation}
\min \frac{1}{2}\|v\|_2^2 \qquad \text{s.t.}  \qquad \exp_x(v) = y. \tag{$\mathcal{P}$}\label{opt-P}
\end{equation}
The constraint can be written with Lagrange multipliers,
\begin{equation}
\min \frac{1}{2}\|v\|_2^2 + \lambda \|\exp_x(v) - y\|^2_2. \tag{$\mathcal{P'}$}\label{opt-P'}
\end{equation}
Now as the $\exp$ map is approximated by $rk_1$ at order 5 locally, writing for any $v$ and $h$ small enough, $\|\exp_x(hv) - y\|_2 \leq \|rk_1(x,v,h)-y\|_2 + \tau_1 h^5$,
\eqref{opt-P'} is equivalent to
\begin{equation}
\min \|rk_1(x,v,h) - y\|^2_2 + \frac{1}{2\lambda }\|v\|_2^2, \tag{$\mathcal{Q}$}\label{opt-Q}
\end{equation}
which is solved by gradient descent (GD) until a convergence tolerance $\epsilon \leq h^5$ is reached.
\end{remark}

\subsubsection{Numerical Simulations}
It is not relevant to compare the PL with SL on spheres or SPD matrices
as in the previous section, as theses spaces are symmetric and thus the PL is exact \cite{guigui_symmetric_2019}. We therefore focus on the Lie group of isometries of $\R^3$, endowed with a left-invariant metric $g$ with the diagonal matrix at identity:
\begin{equation}
   G = \mathrm{diag}(1,1,1,\beta,1,1),
\end{equation}
where the first three coordinates correspond to the basis of the Lie algebra of the group of rotations, while the last three correspond to the translation part, and $\beta>0$ is a coefficient of anisotropy. These metrics were considered in \cite{zefran_generation_1998} in relation with kinematics, and visualisation of the geodesics was provided, but no result on the curvature was given. Following \cite{milnor_curvatures_1976}, we compute explicitly the covariant derivative of the curvature and deduce (proof given in appendix~\ref{appendix:sen})

\begin{lemma}
\label{lemma:sen}
$(SE(3),g)$ is locally symmetric, i.e.\ $\nabla R=0$, if and only if $\beta = 1$.
\end{lemma}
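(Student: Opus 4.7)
The plan is to compute $R$ and $\nabla R$ explicitly on a left-invariant frame of $\se(3)$, using the formulas of Milnor \cite{milnor_curvatures_1976} for the Levi-Civita connection of a left-invariant metric on a Lie group, and then to factor the resulting components as polynomials in $\beta - 1$.

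I fix the basis $(e_1,e_2,e_3,f_1,f_2,f_3)$ of $\se(3)$ with bracket relations $[e_i,e_j]=\epsilon_{ijk}e_k$, $[e_i,f_j]=\epsilon_{ijk}f_k$, $[f_i,f_j]=0$, and extend it by left translations to a global frame of $TSE(3)$. For left-invariant vector fields, the Koszul formula reduces to an algebraic map $U\colon\se(3)\times\se(3)\to\se(3)$ determined by
\[
2\langle U(x,y),z\rangle = \langle[x,y],z\rangle - \langle[y,z],x\rangle + \langle[z,x],y\rangle,
\]
where $\langle\cdot,\cdot\rangle$ is the inner product on $\se(3)$ defined by $G$. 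The computations of $\nabla$, $R$, and $\nabla R$ thus become finite-dimensional linear algebra on $\se(3)$, with $\beta$ and $\beta^{-1}$ entering only through the inner products involving $f_1$.

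I would tabulate $U(x,y)$ for all ordered pairs of basis vectors, which yields coefficients that depend linearly on $\beta-1$ and $\beta^{-1}-1$. Substitution into
\[
R(x,y)z = U(x,U(y,z)) - U(y,U(x,z)) - U([x,y],z)
\]
and then into
\[
(\nabla_x R)(y,z)w = U(x,R(y,z)w) - R(U(x,y),z)w - R(y,U(x,z))w - R(y,z)U(x,w)
\]
exhibits every component of $\nabla R$ as a rational function of $\beta$. Exploiting the discrete symmetry of $G$ under the involution swapping the indices $2$ and $3$ (with appropriate sign flips) considerably reduces the number of components one actually needs to inspect.

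For the sufficient direction, setting $\beta = 1$ makes the $f_1$ direction indistinguishable from $f_2$ and $f_3$, so all components of $U$, $R$, and hence $\nabla R$ reduce to those of the fully isotropic left-invariant metric, which is known to define a Riemannian symmetric space structure on $SE(3)$ (see \cite{zefran_generation_1998}); alternatively, direct substitution makes every component of $\nabla R$ vanish. For the necessary direction, it suffices to exhibit a single component of $\nabla R$ whose coefficient is a non-trivial multiple of $\beta - 1$; a natural candidate, in which the anisotropy of $f_1$ must appear, is of the form $(\nabla_{f_1}R)(f_1,e_2)e_3$ or $(\nabla_{e_2}R)(e_3,f_1)f_2$. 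The main obstacle is the combinatorial bookkeeping of the triple products of $U$, which is the reason these routine but lengthy calculations are relegated to appendix~\ref{appendix:sen}.
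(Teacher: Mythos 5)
Your overall strategy is the same as the paper's: use Milnor's algebraic reduction of the Levi-Civita connection for a left-invariant metric (your map $U$ is exactly $\nabla$ on left-invariant fields, and the paper encodes it via structure constants $C_{ij}^k$ and Christoffel symbols in a $G$-orthonormal frame), handle $\beta=1$ by the isometry with the product $SO(3)\times\R^3$ of symmetric spaces, and handle $\beta\neq 1$ by exhibiting a nonvanishing component of $\nabla R$. The sufficiency direction is fine as you state it.

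The gap is in the necessity direction: you never actually produce and verify a nonzero component. Saying that ``a natural candidate is $(\nabla_{f_1}R)(f_1,e_2)e_3$ or $(\nabla_{e_2}R)(e_3,f_1)f_2$'' is not a proof that $\nabla R\neq 0$; the entire mathematical content of the ``only if'' direction is that verification, and it is genuinely delicate because many individually plausible components vanish identically. Indeed, in the paper's own computation the curvature terms $R(e_3,e_2)e_4$ and $R(e_3,e_1)e_4$ (with $e_4$ the anisotropic translation direction) both turn out to be zero, and the surviving contribution comes only from $R(e_3,e_2)e_5=\frac{1}{2}\bigl(1-\frac{\tau^2}{4}\bigr)e_6$ with $\tau=\sqrt{\beta}+1/\sqrt{\beta}$, giving $(\nabla_{e_3}R)(e_3,e_2)e_4=-\frac{\tau}{4\sqrt{2}}\bigl(1-\frac{\tau^2}{4}\bigr)e_6$, which is nonzero precisely when $\beta\neq 1$ since $1-\frac{\tau^2}{4}=-\frac{(\beta-1)^2}{4\beta}$. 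Note also that the $\beta$-dependence is through $(\beta-1)^2/\beta$ rather than a ``non-trivial multiple of $\beta-1$'' as you predict (harmless here, since it still vanishes only at $\beta=1$ for $\beta>0$, but it shows your factorization heuristic is not reliable). To complete your proof you must pick a specific component, carry the computation of $U$, $R$, and $\nabla R$ through on it, and check that the result is a nonzero multiple of a function of $\beta$ vanishing only at $\beta=1$; without that, the lemma is asserted rather than proved.
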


This allows to observe the impact of $\nabla R$ on the convergence of the PL. 
In the case where $\beta=1$, the Riemannian manifold $(SE(3),g)$ corresponds to the direct product $SO(3)\times \R^3$ with the left-invariant product metric formed by the canonical bi-invariant metric on the group of rotations $SO(3)$ and the canonical inner-product of $\R^3$. 
Therefore the geodesics can be computed in closed form.
Note that the left-invariance refers to the group law which encompasses a semi-direct action of the rotations on the translations. 
When $\beta \not = 1$ however, geodesics are no longer available in closed form and the infinitesimal scheme is used, with the geodesic equations computed numerically (detailed in appendix~\ref{appendix:sen}). In this case, we compute the pole ladder with an increasing number of steps, and then use the most accurate computation as reference to measure the empirical error.

\begin{figure}
    \begin{minipage}{.49\textwidth}
        \centering
        \includegraphics[width=0.9\textwidth]{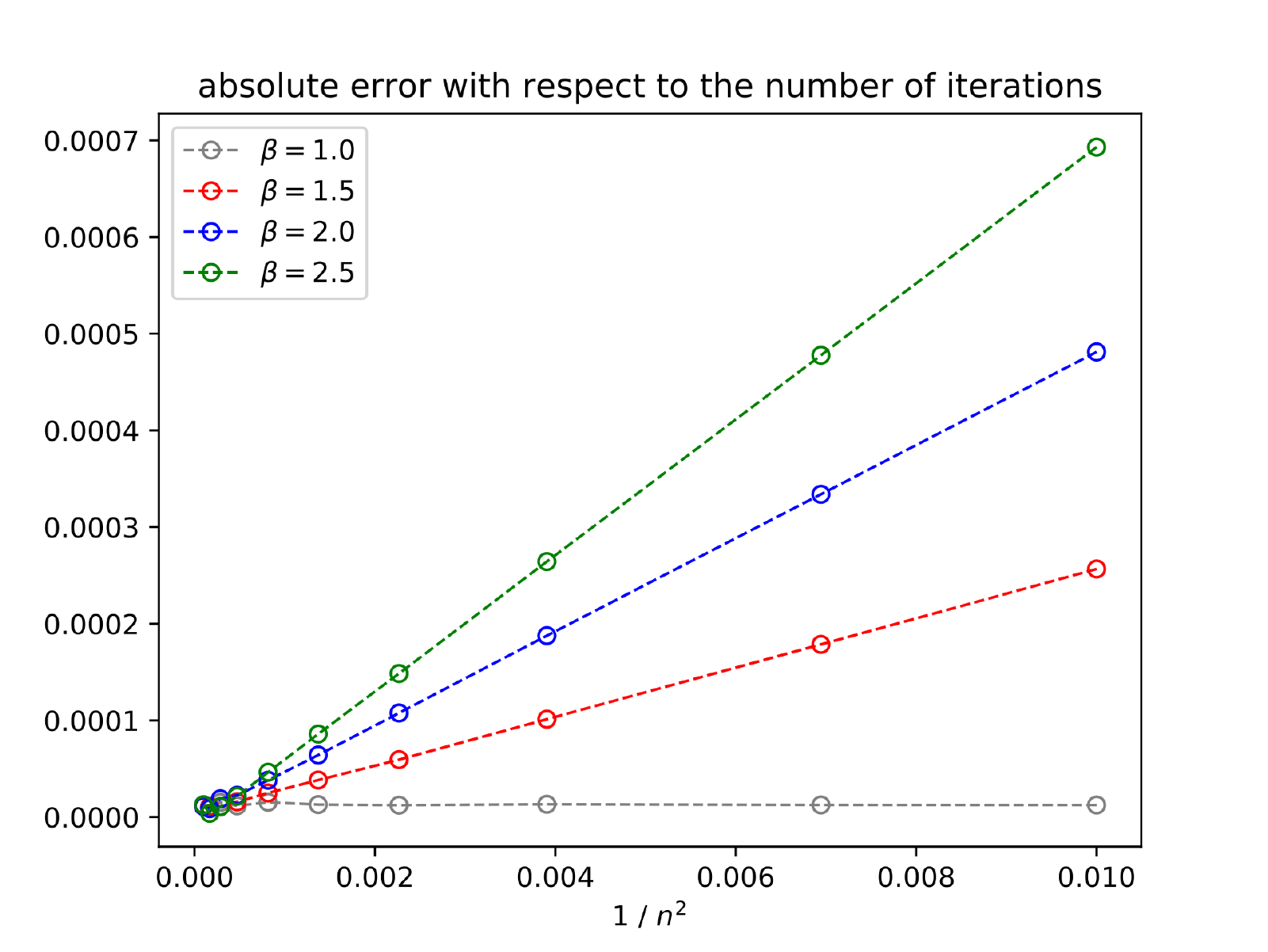}
    \end{minipage}
    \begin{minipage}{.5\textwidth}
        \centering
        \includegraphics[width=0.9\textwidth]{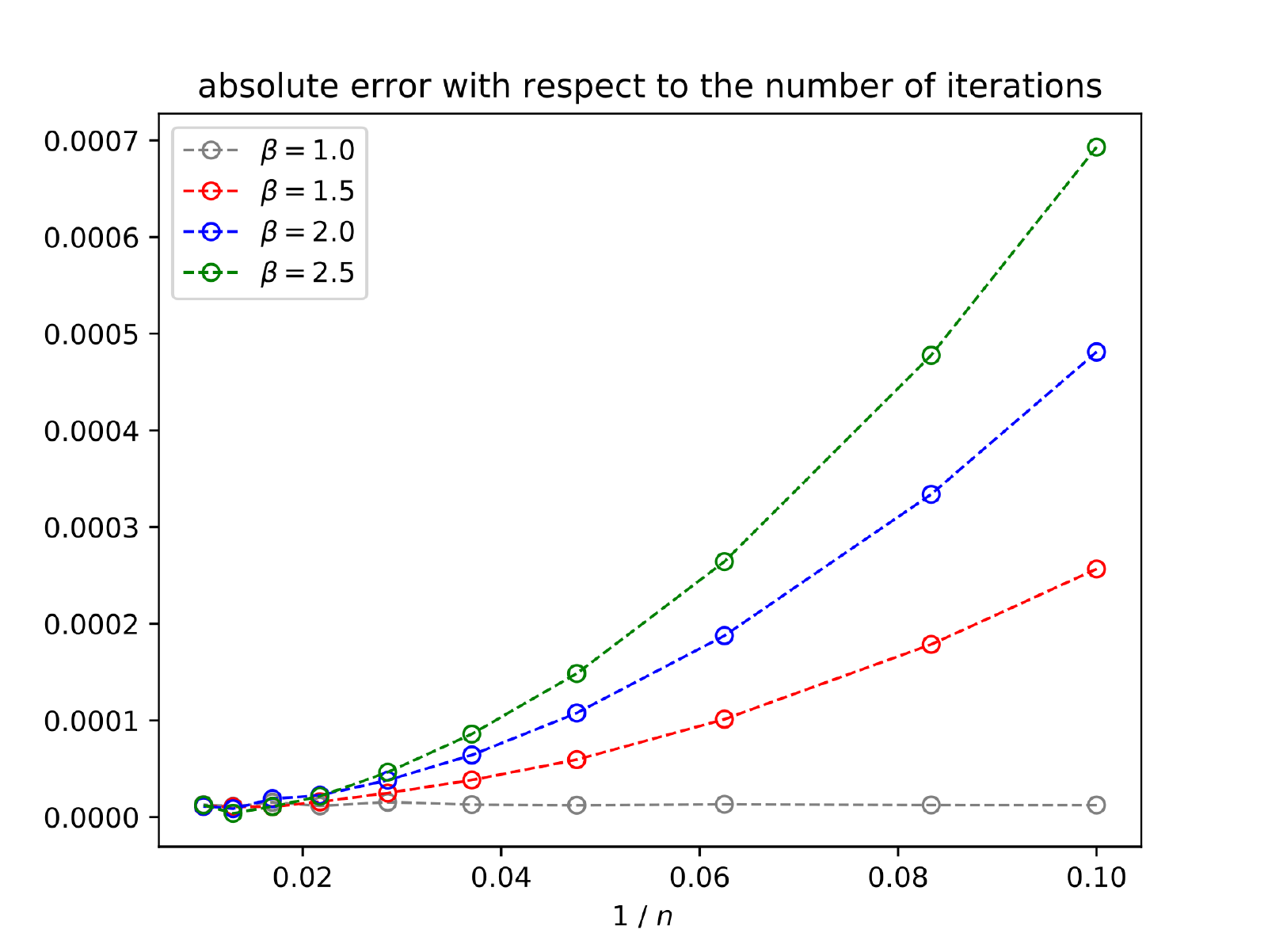}
    \end{minipage}
    \caption{Error of the parallel transport of $v$ along the geodesic with initial velocity $w$ where $v$ and $w$ are orthonormal basis vectors (they stay the same for all values of $\beta$). In accordance with our main result, a quadratic speed of convergence is reached, and the speed depends on the asymmetry of the space, itself induced by the anisotropy $\beta$ of the metric. Left : Absolute error w.r.t\ $\frac{1}{n^2}$: these are straight lines with slopes growing with $\beta$. Right: Absolute error w.r.t.\ $\frac{1}{n}$ showing the quadratic convergence.}
    \label{fig:pole_sen}
\end{figure}

Results are displayed on Fig.~\ref{fig:pole_sen}. Firstly, we observe very precisely the quadratic convergence of the infinitesimal scheme, as straight lines are obtained when plotting the error against $\frac{1}{n^2}$. Secondly, we see how the slope varies with $\beta$: accordingly with our results, it cancels for $\beta=1$ and increases as $\beta$ grows.

Finally, for completeness, we compare Schild's ladder and the pole ladder in this context. We cannot choose two basis vectors $v,w$ (that don't change when $\beta$ changes) such that $R(w,v)v \neq 0$ when $\beta=1$ and $\nabla_w R(w,v)w \neq 0$ when $\beta \neq 1$. Indeed, the former condition implies that $v,w$ are infinitesimal rotations, which implies $\forall \beta, \nabla_w R(w,v)w=0$.
Therefore, we choose $v,w$ such that the latter condition is verified, so that the SL error is of order four in our example (but it is not exact), and it cannot be distinguished from PL, but this is only a particular case. The results are shown on Fig.~\ref{fig:schild_pole_anisotropic}. Note that due to the larger number of operations required for SL, our implementation is less stable and diverges when $n$ grows too much ($\sim 50$). As expected when $\beta>1$, the speeds of convergence are of the same order for PL and SL, but the multiplicative constant is smaller for PL.

We also compare ladder methods to the Fanning Scheme, and as expected the quadratic speed of convergence reached by ladder methods yields a far better accuracy even for small $n$. We now compare the infinitesimal SL, PL and the FS in terms of computational cost.

\begin{figure}
    \begin{minipage}{.49\textwidth}
        \centering
        \includegraphics[width=0.9\textwidth]{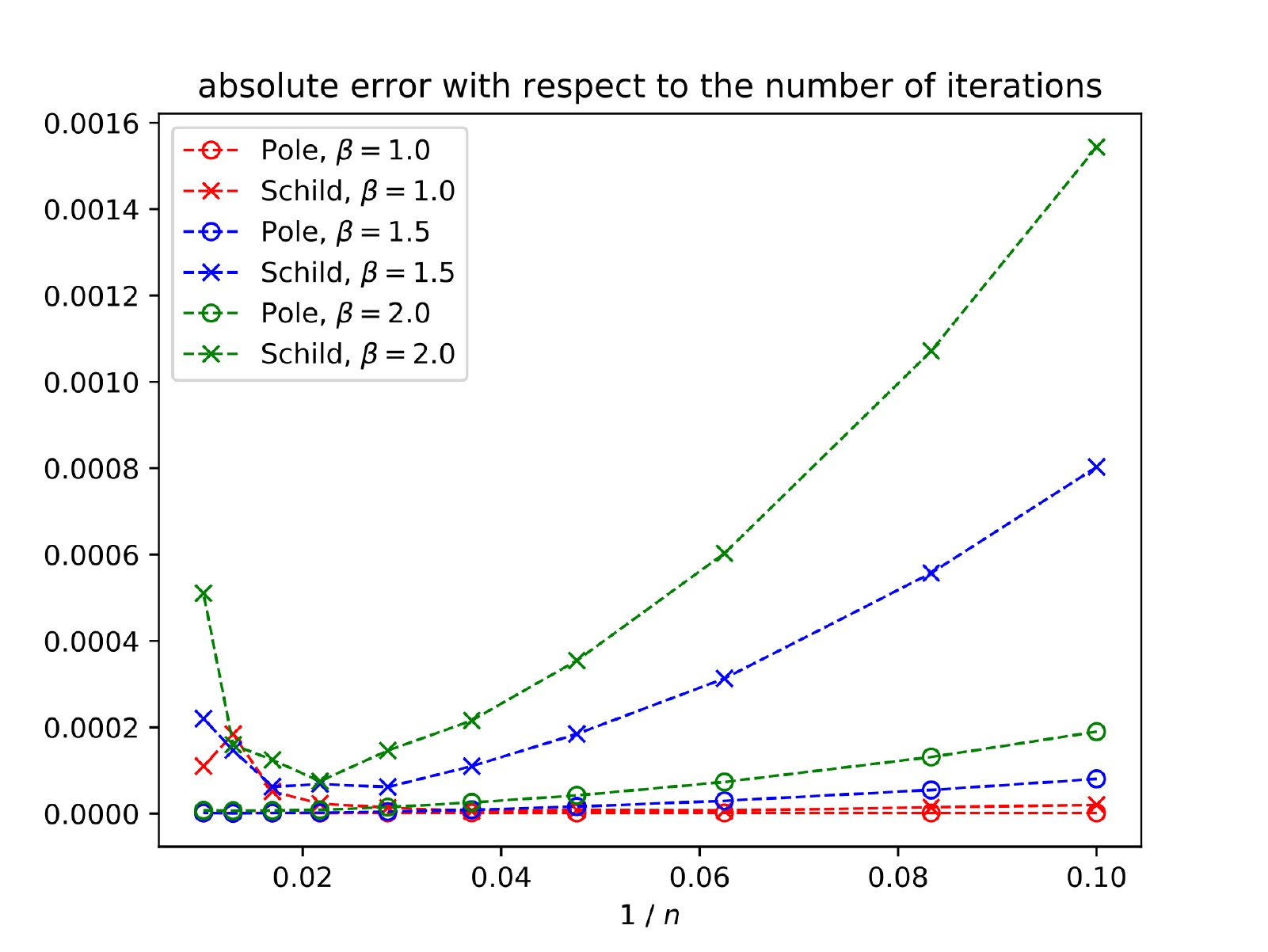}
    \end{minipage}
    \begin{minipage}{.5\textwidth}
        \centering
        \includegraphics[width=0.9\textwidth]{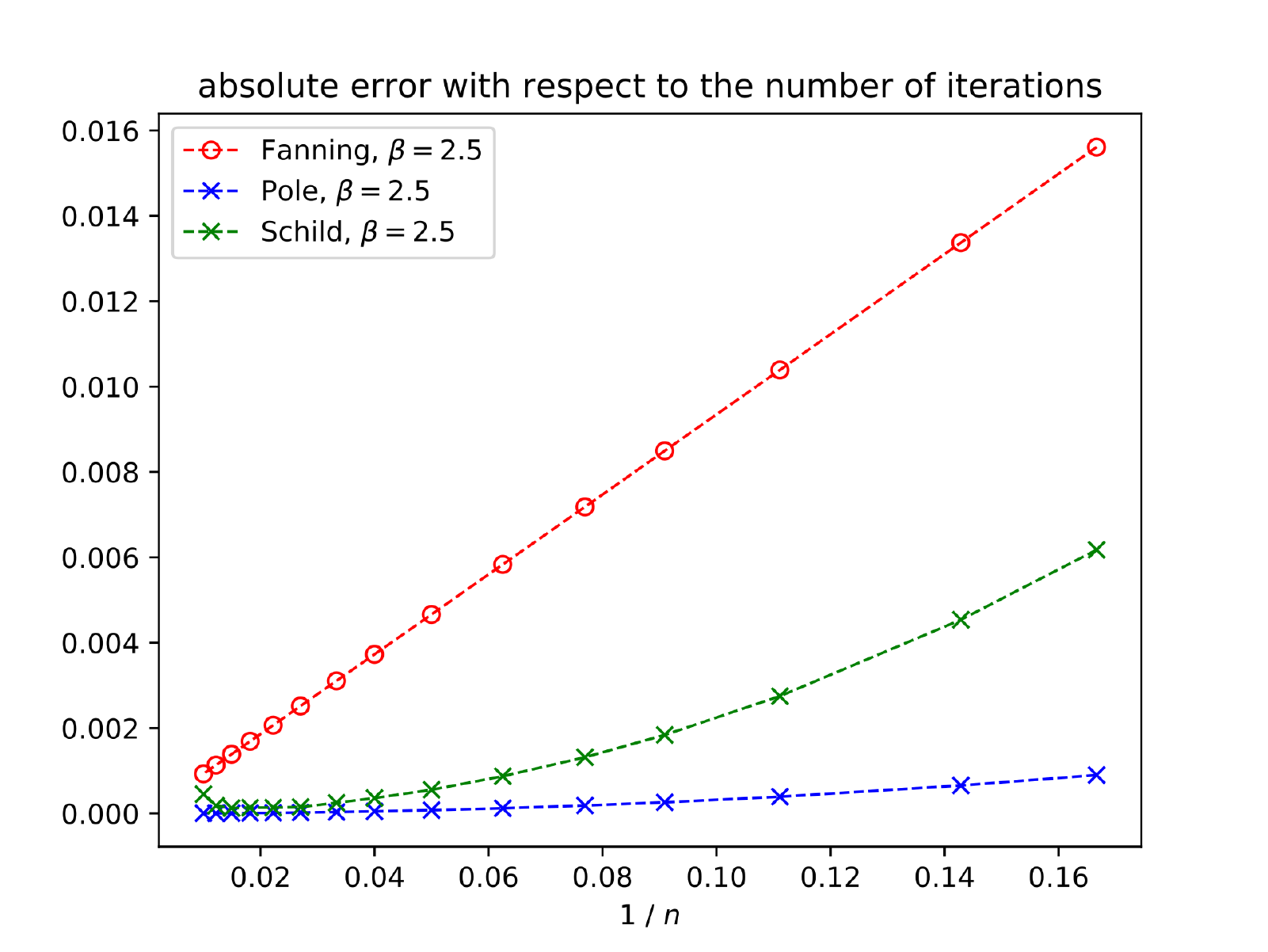}
    \end{minipage}
    \caption{left: Comparison of the speed of convergence of SL and PL for two orthonormal basis vectors of $T_xSE(3)$ depending on the coefficient of anisotropy $\beta$ of the metric. PL converges faster and is much more stable right: Comparison of SL, the PL and the FS in the same setting with fixed $\beta$. (Note that absolute and relative errors are the same here as $v$ has unit norm.)}
    \label{fig:schild_pole_anisotropic}
\end{figure}

\subsubsection{Remarks on Complexity}
\label{sec:complexity}
At initialization, ladder methods require to compute $x_v$, with one call to the numerical scheme $rk$ (e.g Runge-Kutta). Then the main geodesic needs to be computed, for SL and FS it requires $n$ calls to $rk$. For PL, we only take a half step at initialization, to compute the first midpoint, then compute all the $m_i$s so that one final call to $rk$ is necessary to compute the final point of the geodesic $x_n$, totalling $n+1$ calls. Then at every iteration, considering given $m_i, z_i$, one needs to compute an inversion of $rk$, and then shoot with twice (or minus for PL) the result. In practice the inversion of $rk$ by gradient descent (i.e. shooting) converges in about five or six iterations, each requiring one call to $rk$. This operation thus requires less than ten calls to $rk$. Additionally, for SL only, the midpoint needs to be computed by one inversion and one call to $rk$, thus adding ten calls to the total. At the final step, another inversion needs to be computed, adding less than ten calls. Therefore, SL requires $21n + 11$ calls to $rk$, the PL $11(n+1)$. In contrast, the FS only requires to compute one (or two) perturbed geodesic and finite differences instead of the approximation of the log at every step. It therefore require $3n$ calls to $rk$.

Moreover as the FS is intrinsically a first-order scheme, a second-order $rk$ step is sufficient to guarantee the convergence, thus requiring only two calls to the geodesic equation ---the most significantly expensive computation at each iteration--- while ladder methods require a fourth-order scheme, which is twice as expensive. However, this additional cost is quickly balanced as only $O(\sqrt{n})$ steps are needed to reach a given accuracy, where $O(n)$ are required for the FS. Comparing the values of $n\mapsto 4 * \frac{11}{\sqrt{n + 1}}$ and $n \mapsto 2*\frac{3}{n}$, we conclude that PL is the cheapest option as soon as we want to achieve a relative accuracy better than $2.10^{-2}$. 
Note that this doesn't take into account the constants $\beta$, that may differ, but the previous numerical simulations show that these estimates are valid: a regression on the cost for the PL gives $y=41n + 45$. Finally, in the experiment of Fig.~\ref{fig:schild_pole_anisotropic}, the PL with $n=6$ yields a precision of $8.10^{-4}$ for a cost of $304$ calls to the equations, while $n=250$ is required to reach that precision with the FS, for a cost of $1500$ calls. For a similar computational budget, the PL reaches a precision $\sim 2.10^{-5}$ with $n=37$.

\section{Conclusion}

In this paper, we jointly analysed the behaviour of ladder methods to compute parallel transport. We first gave a Taylor expansion of one step of Schild's Ladder. Then, we showed that when scaling the vector to transport by $\frac{1}{n^2}$, a quadratic speed of convergence is reached. Our numerical experiments illustrate that this bound is indeed reached. In the same framework, we bridged the gap between the Fanning Scheme and Schild's Ladder, shedding light on how SL could be turned into a second-order method while the FS cannot.

For manifolds with no closed-form geodesics, we introduced the infinitesimal ladder schemes and showed that the PL converges with the same order as its counterpart with exact exp and log maps. The same exercise can be realized with SL. 
Numerical experiments were performed on $SE(3)$ with anisotropic metric, and allowed to observe the role of $\nabla R$ in the convergence of the PL in a non-symmetric space. This result corroborates our theoretical developments, and shows that the bounds on the speed of convergence are reached.




Our last comparison of SL and PL shows that, although more popular, SL is far less appealing that the PL as (i) it is more expensive to compute, (ii) it converges slower, (iii) it is less stable when using approximate geodesics and (iv) it is not exact in symmetric spaces. Pole ladder is also more appealing than the FS because of its quadratic speed of convergence, which allows to reach mild convergence tolerance at a much lower overall cost despite the higher complexity of each step.

The ladder methods are restricted to transporting along geodesics, but this not a major drawback as this is common to other methods, and one may approximate any curve by a piecewise geodesic curve.

In our work on infinitesimal schemes, we only used basic integration of ODEs in charts, while there is a wide literature on numerical methods on manifolds. In future work, it would be very interesting to consider specifically adapted RK schemes on Lie groups and homogeneous spaces \cite{munthe-kaas_integrators_2016,munthe-kaas_numerical_1997}, or symplectic integrators \cite{hairer_symplectic_2002, dedieu_symplectic_2005} in order to reduce the computational burden and to improve the stability of the scheme. More precisely, the computations of the PL may be hindered by the approximation of the log, while in fact, only a geodesic symmetry $y\mapsto \exp_x(-\log_x(y))$ is necessary and may be computed more accurately with a symmetric scheme.

\begin{acknowledgements}
The authors have received funding from the European Research Council (ERC) under the European Union’s Horizon 2020 research and innovation program (grant agreement G-Statistics No 786854). The authors warmly thank Yann Thanwerdas and Paul Balondrade for insightful discussions and careful proofreading of this manuscript.
\end{acknowledgements}

\bibliographystyle{spmpsci}      
\bibliography{Paper_Ladders.bib}

\newpage
\appendix
\section{Computation of the expansion of Schild's ladder}
\label{appendix_taylor_schild}
The details of the Taylor approximation for SL are given below at the fourth order, and a lemma to bound the fourth order terms is proved. First we combine \eqref{eq:taylor_double_exp} and \eqref{eq:taylor_double_log} to compute an approximation of $a = \log_x(m)$ where $m$ is the midpoint between $x_v$ and $x_w$. That is, $a = h_x(v, \frac{1}{2} b)$ where $b = l_x(v, w)$:

\begin{align*}
    2a &= v + b +\frac{1}{6} R(b,v)(v + b) + \frac{1}{24}\big( (\nabla_v R)(b,v)(\frac{5}{2}b + 2v) + (\nabla_{b} R)(\frac{1}{2}b,v)(v + b) \big) + O(5) \\
       &= v + w - v +\frac{1}{6} R(v,w)(2w - v) + \frac{1}{24}\Big( (\nabla_v R)(v,w)(3w - 2v) \nonumber\\
        &\quad+ (\nabla_{w} R)(v,w)(2 w - v) \Big) +\frac{1}{6} R(\frac{1}{2}(w-v),v)(v + w-v) \nonumber\\
        &\quad+ \frac{1}{24}\big( (\nabla_v R)(w-v,v)(\frac{5}{2}(w-v) + 2v)  + (\nabla_{w-v} R)(\frac{1}{2}(w-v),v)(v + w-v) \big) + O(5)\\
       &= w + \frac{1}{6} R(v,w)(2w - v - w) + \frac{1}{24}\Big( (\nabla_v R)(v,w)(3w - 2v -2v - \frac{5}{2}(w-v))\nonumber\\
        &\quad + (\nabla_{w} R)(v,w)(2 w - v) + (\nabla_{w-v} R)(v,w)(- \frac{1}{2} w) \Big) + O(5).
\end{align*}
Therefore,
\begin{align}
    2a &= w + v + \frac{1}{6} R(v, w)(w - v)\nonumber\\
        &\quad +  \frac{1}{24}\Big( (\nabla_v R)(v,w)(w - \frac{3}{2}v)+ (\nabla_w R)(w,v)(v - \frac{3}{2}w)\Big) + O(5). \label{eq:2a_bis}
\end{align}
Now we compute $u = l_x(w, 2a)$:
\begin{align*}
    u &= 2a - w + \frac{1}{6} R(2a, w)(w - 4a) \nonumber\\
        &\quad + \frac{1}{24}\Big( (\nabla_w R)(w, 2a)(3*2a - 2w) + (\nabla_{2a} R)(w, 2a)(2* 2a - w) \Big) + O(5) \\
      &= w + v + \frac{1}{6} R(v, w)(w - v) \nonumber \\
        &\quad +  \frac{1}{24}\Big( (\nabla_v R)(v,w)(w - \frac{3}{2}v) + (\nabla_w R)(w,v)(v - \frac{3}{2}w)\Big)\nonumber \\
        &\quad - w + \frac{1}{6} R\big( w + v, w\big)(w - 2(w + v)) + \frac{1}{24}\Big( (\nabla_w R)(w, w + v)(3w +3v - 2w)\nonumber\\
        &\quad + (\nabla_{w+v} R)(w, w+v)(2w +2v - w) \Big) + O(5) \\
      &= v + \frac{1}{6} R(v, w) (w - v - w - 2v) \nonumber \\
        &\quad +  \frac{1}{24}\Big( (\nabla_v R)(v,w)(w - \frac{3}{2}v -2v -w) + (\nabla_w R)(v, w)(v - \frac{3}{2}w - w - 3v -2v -w)\Big). 
\end{align*}
Thus
\begin{align}
    u &= v - \frac{1}{2} R(v, w)v \nonumber\\
        &\quad + \frac{1}{24}\Big( (\nabla_v R)(v,w)(-\frac{7}{2}v) + (\nabla_w R)(v,w)(-4v - \frac{7}{2}w)\Big) + O(5). \label{eq:schild_bis}
\end{align}
We deduce the following
\begin{lemma}
\label{lemma:r4}
    With the previous notations, at $x$ in a compact set $K$, $u$ can be written $u = v + \frac{1}{2} R(w, v)v + r_4(v,w)$ and there exists $C>0$ such that $\exists \eta>0, \forall v,w \in B_0(\delta) \subset T_x\M, \forall |s|,|t|\leq \eta$, $r_4$ verifies:
    \begin{equation}
        \|r_4(sv,tw)\| \leq C (s^3t + s^2 t^2 + s t^3) \delta^4.
    \end{equation}
If furthermore $|s| \leq |t|$, then this reduces to $\|r_4(sv,tw)\| \leq C s t^3 \delta^4$. Moreover, $C$ can be bounded by bounds on the covariant derivatives of the curvature tensor.
\end{lemma}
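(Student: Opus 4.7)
The plan is to read the fourth-order part directly from the explicit composition in \eqref{eq:schild_bis}, and to control the degree-$\ge 5$ tail by combining two structural symmetries of the Schild construction with uniform bounds on curvature invariants over the compact set $K$.

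First I would establish the symmetries $u(0,w)=0$ and $u(v,0)=v$: the first holds because transporting the zero vector yields the zero vector, the second because when $w=0$ the base geodesic is trivial, so the parallelogram collapses to $z=x_{2a}=x_v$ and the output is exactly $v$. Since $v+\tfrac12 R(w,v)v$ also satisfies both identities, $r_4(v,w)$ must vanish on $\{v=0\}\cup\{w=0\}$. In the Taylor series of $r_4(sv,tw)$ in the real variables $(s,t)$ this forces every surviving monomial $s^n t^m$ to satisfy $n\ge 1$ and $m\ge 1$.

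Next I would substitute $v\mapsto sv$, $w\mapsto tw$ into the degree-four part given by \eqref{eq:schild_bis}, namely the three contractions $(\nabla_v R)(v,w)v$, $(\nabla_w R)(v,w)v$ and $(\nabla_w R)(v,w)w$. By multilinearity these carry the factors $s^3 t$, $s^2 t^2$ and $s t^3$ respectively, and by the uniform bound on $\nabla R$ over $K$ their norms are bounded by $C_1(s^3 t+s^2 t^2+s t^3)\delta^4$. This accounts exactly for the degree-four contribution to $r_4$.

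For the degree-$\ge 5$ tail I would use that $u = l_x(w, 2h_x(v,\tfrac12 l_x(v,w)))$ is smooth in its arguments, with Taylor coefficients at $x$ that are polynomial expressions in $R$ and its covariant derivatives and hence uniformly bounded on $K$. Taylor's theorem with integral remainder then bounds the tail by $\sum_{n+m\ge 5,\ n,m\ge 1} C_{n,m}\, s^n t^m \delta^{n+m}$. For any such $(n,m)$, one checks case by case that $(3,1)$, $(2,2)$ or $(1,3)$ majorises it in each coordinate minus one, so a single extra factor of $s$ or $t$ (bounded by $\eta$) together with one extra factor of $\delta$ (bounded on $K$) reduces each term to one of the three admissible fourth-order monomials. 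For $\eta$ small enough the geometric series converges and is bounded by $C_2(s^3 t+s^2 t^2+s t^3)\delta^4$; taking $C=C_1+C_2$ yields the main inequality, with $C$ depending only on bounds on $\nabla^k R$ over $K$. The moreover clause follows at once: under $|s|\le|t|$ one has $s^3 t\le s t^3$ and $s^2 t^2\le s t^3$, so the three monomials collapse into $s t^3$ up to a factor of $3$ absorbed into $C$. The only delicate point is the uniform tail estimate; the crucial input there is the initial symmetry argument, which rules out any pure power $s^n$ or $t^m$ in $r_4$ and thus guarantees that every higher-order monomial can be dominated by a fourth-order one of the required shape.
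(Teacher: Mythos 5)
Your proof is correct and follows the same basic route as the paper's: read the degree-four terms off the explicit expansion \eqref{eq:schild_bis}, bound each contraction by $\|\nabla R\|_\infty$ times the appropriate monomial $s^3t$, $s^2t^2$ or $st^3$ with $\|v\|,\|w\|\leq\delta$, and then absorb the degree-$\geq 5$ tail into the same three monomials using one spare factor of $|s|$ or $|t|$ (bounded by $\eta$) and of $\delta$. The one genuine difference is your opening symmetry argument: the paper simply asserts that the $O(5)$ terms, being ``homogeneous of degree at least five,'' are dominated by $C_1(s^3t+s^2t^2+st^3)\delta^4$, which is false for a general degree-five monomial (e.g.\ $s^5$ survives at $t=0$ while the right-hand side vanishes); the needed fact is that every monomial in the tail contains at least one factor of each of $s$ and $t$, which the paper leaves implicit in the structure of $q_5$ and which your observation that $u(v,0)=v$ and $u(0,w)=0$ (hence $r_4$ vanishes on both axes) establishes cleanly. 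This is a worthwhile addition. The only caveat is your treatment of the tail as a convergent sum $\sum_{n+m\geq 5}C_{n,m}s^nt^m$: for a merely smooth (non-analytic) map this is not literally Taylor's theorem, and the clean way to finish is to note that the remainder after order four vanishes on $\{s=0\}\cup\{t=0\}$, factor out $st$ by Hadamard's lemma, and bound the remaining smooth factor by $C(|s|+|t|)^3$; this yields exactly the domination you want without invoking a series. The moreover clause is handled correctly.
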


\begin{proof}
Let $t' = \min(|t|, |s|)$. By \eqref{eq:schild_bis} we get
\[r_4(sv, tw) = \frac{st}{24}\Big( (\nabla_v R)(w,sv)(\frac{7}{2}sv) + (\nabla_w R)(tw,v)(4sv + \frac{7t}{2}w)\Big) + O(5). \]
Each term of the form $\nabla_\cdot R(tw,sv)\cdot$ can be bounded, for example:
\begin{equation}
    \|st(\nabla_v R)(w,sv)(\frac{7}{2}sv)\| \leq s^3 t \|\nabla R\|_\infty \|w\| \|v\|^3,
\end{equation}
where the infimum norm on $\nabla R$ is taken on the compact set $K$ (thus it exists and it is finite). Similarly, as $\|w\| \leq \delta$ and $\|v\| \leq \delta$
\begin{equation}
    \|s(\nabla_v R)(tw,sv)(\frac{7}{2}sv)\| \leq s^3 t \|\nabla R\|_\infty \delta^3 .
\end{equation}
By dealing in a similar fashion with the two other terms, we obtain:
\begin{equation}
    \|r_4(sv,tw)\| \leq \|\nabla R\|_\infty (s^3t + s^2 t^2 + s t^3) \delta^4 + O(5),
\end{equation}
where $O(5)$ is a combination of terms of the form $q_5(sa, tb)$, which are homogeneous polynomials of degree at least five and variables in the ball of radius $\delta$, hence they can be bounded above by $C_1 (s^3t + s^2 t^2 + s t^3) \delta^4$ for some $C_1$. The result follows with $C= \|\nabla R\|_\infty + C_1$.
\qed
\end{proof}

\section{Computation of the expansion of the pole ladder}
\label{appendix_taylor_pole}
As in the previous appendix, we give here the computations for the fourth-order Taylor approximation of the PL construction. Recall (Fig.~\ref{fig:pole_construction}) that $v,w \in T_m\M$ where $m=\gamma(\frac{1}{2})$ is the midpoint between $x=\gamma(0)$ and $\gamma(1)$. The result of the construction parallel transported back to $m$ is $u$ such that:
\begin{align*}
    -u &= l_m(w, -h_m(-w, v)) \\
       &= w + h_m(-w, v) + \frac{1}{6}R(w, -h_m(-w, v))(-2h_m(-w, v) - w) \\
        &\quad + \frac{1}{24}\big( (\nabla_w R)(w,-h_m(-w, v))(-3h_m(-w, v) - 2w) \\
        &\quad+ (\nabla_{-h_m(-w, v)} R)(w,-h_m(-w, v))(-2h_m(-w, v) - w) \big) + O(5) . \\
\end{align*}
Now we plug \eqref{eq:taylor_double_exp}, but it reduces to $-h_m(-w, v)=w-v + O(3)$ when it appears in a term including curvature (because we restrict to fourth order terms overall). Hence
\begin{align*}
    -u &= -( - w + v +\frac{1}{6} R(v,-w)(-w + 2v) + \frac{1}{24}\big( (\nabla_{-w} R)(v,-w)(5v - 2w)\\
          &\quad + (\nabla_{v} R)(v,-w)(-w + 2v) \big) + q_5(w,v)) + \frac{1}{6}R(w, w-v)(2(w-v) - w) \\
          &\quad + \frac{1}{24}\big( (\nabla_w R)(w,w-v)(3(w - v) - 2w) + (\nabla_{w-v} R)(w,w-v)(2(w - v) - w) \big) + O(5) \\
       &= -v + \frac{1}{6} R(v,w)(-w + 2v) + \frac{1}{6}R(w, -v)(w - 2v)\\
          &\quad + \frac{1}{24}\big( (\nabla_{w} R)(v,-w)(5v - 2w) + (\nabla_{v} R)(v,w)(-w + 2v) \big)\\
          &\quad + \frac{1}{24}\big( (\nabla_w R)(v,w)(w - 3v) + (\nabla_{w-v} R)(v,w)(w - 2v) \big) + O(5)\\
       &= -v + \frac{1}{12}\big( (\nabla_{w} R)(v,w)(5v -w) + (\nabla_{v} R)(v,w)(2v - w) \big) +O(5) .
\end{align*}

\section{Geometry of the sphere}
\label{appendix:sphere}
The unit-sphere is defined as $S^2=\{x \in \R^3, \|x\|=1\}$. With the canonical scalar product $<\cdot, \cdot>$ of $\R^3$, it is a Riemannian manifold of constant curvature whose geodesics are great circles. The tangent space of any $x \in S^2$ is the set of vectors orthogonal to $x$: $T_xS^2 = \{v \in \R^3, <x,v>=0\}$. 
We have $\forall x,y \in S^2, \forall v,w \in T_xS^2$:
\begin{align*}
    \exp_x(w) &= \cos(\|w\|)x + \sin(\|w\|)\frac{w}{\|w\|},\\
    \log_x(y) &= \arccos(<y,x>)\frac{y - <y,x>x}{\|y - <y,x>x\|},\\
    \Pi_x^{x_w}v &= <v,\frac{w}{\|w\|}> \left(- \sin(\|w\|)x + \cos(\|w\|)\frac{w}{\|w\|}\right) + \left(v - <v,\frac{w}{\|w\|}>\frac{w}{\|w\|} \right).
\end{align*}
The expression for parallel transport of $v$ means that the orthogonal projection of $v$ on $\{w\}^{\perp}$ is preserved, while the orthogonal projection on $\R w$ is rotated by an angle $\|w\|$ in the $(x,w)$-plane.

\section{Affine-Invariant geometry of SPD matrices}
\label{appendix:spd}
The cone of $3 \times 3$ symmetric positive-definite matrices is defined as
\begin{equation*}
    SPD(3)=\{\Sigma \in \R^{3\times 3}, \Sigma^T=\Sigma, \;\; \forall x \in R^3\setminus\{0\},\;\; x^T\Sigma x > 0\}.
\end{equation*}
The tangent space of any $\Sigma \in SPD(3)$ is the set symmetric matrices: $T_\Sigma SPD(3) = Sym(3)$. The affine-invariant (AI) metric is defined at any $\Sigma \in SPD(3)$, for all $V,W \in Sym(3)$ using the matrix trace $tr$ by
\begin{equation*}
    g_\Sigma(V,W) = tr(\Sigma^{-1}V \Sigma^{-1}W).
\end{equation*}
We have $\forall \Sigma, \Sigma_1,\Sigma_2 \in SPD(3), \forall W \in Sym(3)$
\begin{align*}
    \exp_\Sigma(W) &= \Sigma^{\frac{1}{2}}\exp(\Sigma^{-\frac{1}{2}}W\Sigma^{-\frac{1}{2}})\Sigma^{\frac{1}{2}},\\
    \log_{\Sigma_1}(\Sigma_2) &= \Sigma_1^{\frac{1}{2}}\log(\Sigma_1^{-\frac{1}{2}}\Sigma_2 \Sigma_1^{-\frac{1}{2}})\Sigma_1^{\frac{1}{2}},
\end{align*}
where when not indexed, $\exp$ and $\log$ refer to the matrix operators. Finally, let 
\[P_t = \Sigma^{\frac{1}{2}}\exp(\frac{t}{2}\Sigma^{-\frac{1}{2}}W\Sigma^{-\frac{1}{2}})\Sigma^{-\frac{1}{2}}.\]
The parallel transport from $\Sigma$ along the geodesic with initial velocity $W\in Sym(3)$ of $V\in Sym(3)$ a time $t$ is (\cite{yair_parallel_2019})
\begin{equation*}
    \Pi_{0,W}^t V = P_t V P_t^T.
\end{equation*}

\section{Left-invariant metric on SE(3)}
\label{appendix:sen}
In this appendix, we describe the geometry of the Lie group  of isometries of $\R^3$, $SE(3)$, endowed with a left-invariant metric $g$. We prove lemma~\ref{lemma:sen}, that gives a necessary and sufficient condition for $(SE(3), g)$ to be a Riemannian locally symmetric space. We also give the details for the computation of the geodesics with numerical integration schemes. Those results and computations are valid in any dimension $d \geq 2$, but we only detail them for $d=3$ to keep them tractable.
For the details on this section, we refer the reader to \cite{milnor_curvatures_1976, gallier_differential_2020, kolev_lie_2004}.

$SE(3)$, is the semi-direct product of the group of three-dimensional rotations $SO(3)$ with $\R^3$, i.e. the group multiplicative law for $R,R' \in SO(3), t,t' \in \R^3$ is given by
\begin{equation*}
    (R,t)\cdot (R',t') = (RR', t + Rt').
\end{equation*}
It can be seen as a subgroup of $GL(4)$ and represented by homogeneous coordinates:
\begin{equation*}
    (R,t) = \begin{pmatrix} R & t \\ 0 & 1 \end{pmatrix},
\end{equation*}
and all group operations then correspond to the matrix operations. Let the metric matrix at the identity be diagonal: $G=\mathrm{diag}(1,1,1,\beta,1,1)$ for some $\beta>0$, the anisotropy parameter. We write $<\cdot, \cdot>$ for the associated inner-product at the identity. An orthonormal basis of the Lie algebra $\se(3)$ is
\begin{align*}
    e_1 = \frac{1}{\sqrt{2}}\begin{pmatrix} 0 & 0 & 0 & 0 \\
            0 & 0 & -1 & 0\\ 0 & 1 & 0 & 0 \\ 0 & 0 & 0 & 0 \end{pmatrix}
    &\qquad&
    e_2 = \frac{1}{\sqrt{2}}\begin{pmatrix} 0 & 0 & 1 & 0 \\
        0 & 0 & 0 & 0\\ -1 & 0 & 0 & 0 \\ 0 & 0 & 0 & 0 \end{pmatrix}
    &\qquad&
    e_3 = \frac{1}{\sqrt{2}}\begin{pmatrix} 0 & -1 & 0 & 0 \\
        1 & 0 & 0 & 0\\ 0 & 0 & 0 & 0 \\ 0 & 0 & 0 & 0 \end{pmatrix}\\
    e_4 = \frac{1}{\sqrt{\beta}}\begin{pmatrix} 0 & 0 & 0 & 1 \\
        0 & 0 & 0 & 0\\ 0 & 0 & 0 & 0 \\ 0 & 0 & 0 & 0 \end{pmatrix}
    &\qquad&
    e_5 = \begin{pmatrix} 0 & 0 & 0 & 0 \\
        0 & 0 & 0 & 1\\ 0 & 0 & 0 & 0 \\ 0 & 0 & 0 & 0 \end{pmatrix}
    &\qquad&
    e_6 = \begin{pmatrix} 0 & 0 & 0 & 0 \\
        0 & 0 & 0 & 0\\ 0 & 0 & 0 & 1 \\ 0 & 0 & 0 & 0 \end{pmatrix} .
\end{align*}
Define the corresponding structure constants $C_{ij}^k = <[e_i,e_j],e_k>$, where the Lie bracket $[\cdot,\cdot]$ is the usual matrix commutator. It is straightforward to compute
\begin{align}
    \label{eq:structure_constants}
    C_{ij}^k &= \frac{1}{\sqrt{2}} \;\; \textrm{if} \;\;ijk \;\;\textrm{is a direct cycle of}\;\; \{1,2,3\};\\
    C_{15}^6 &= - C_{16}^5 = - \sqrt{\beta} C_{24}^6 = \frac{1}{\sqrt{\beta}} C_{26}^4 = \sqrt{\beta} C_{34}^5 = -\frac{1}{\sqrt{\beta}} C_{35}^4 = \frac{1}{\sqrt{2}} .
\end{align}
and all others that cannot be deduced by skew-symmetry of the bracket are equal to $0$. Extend the inner-produt defined in the Lie algebra by $G$ to a left-invariant metric $g$ on $SE(3)$. Let $\nabla$ be its associated Levi-Civita connection. It is also left-invariant, and it is sufficient to know its values on left-invariant vector fields at the identity (identified with tangent vectors at the identity). These are linked to the structure constants by
\begin{equation*}
    \nabla_{e_i}e_j = \frac{1}{2} \sum_k (C_{ij}^k - C_{jk}^i + C_{ki}^j)e_k,
\end{equation*}
and can thus be computed explicitly thanks to \eqref{eq:structure_constants}. Let $\Gamma_{ij}^k = <\nabla_{e_i}e_j,e_k>$ be the associated Christoffel symbols. Let $\tau = (\sqrt{\beta} + \frac{1}{\sqrt{\beta}})$. We obtain
\begin{align}
    \label{eq:christoffels}
    \Gamma_{ij}^k &= \frac{1}{2\sqrt{2}} \;\; \textrm{if} \;\;ijk \;\;\textrm{is a cycle of [1,2,3]} ,\\
    \Gamma_{15}^6 &= - \Gamma_{16}^5 = - \frac{2}{\tau} \Gamma_{24}^6 = \frac{2}{\tau} \Gamma_{26}^4 = \frac{2}{\tau} \Gamma_{34}^5 = -\frac{2}{\tau} \Gamma_{35}^4 = \frac{1}{\sqrt{2}},
\end{align}
and all other are null.
Finally, recall that the curvature tensor and its covariant derivative at the identity can be defined $\forall u,v,w,z\in \se(3)$ by
\begin{align*}
    R(u,v) &= \nabla_u \nabla_v - \nabla_v \nabla_u - \nabla_{[u,v]} \\
    (\nabla_u R)(v,w)z &= \nabla_u \big(R(v,w)z \big) - R\big(\nabla_u v,w \big)z -R\big( v,\nabla_u w \big)z - R\big( v,w \big)\nabla_u z.
\end{align*}

\subsection{Geodesics}
\subsubsection{General Case}
For the computation of the geodesics, let $(x,v) \in TSE(3)$ and $\gamma$ be the geodesic such that $\gamma(0)=x, \dot \gamma(0)=v$. At all times $t$, define the Eulerian velocity $X(t) = (L_{\gamma^{-1}(t)})_* \dot \gamma(t)~\in~\se(3)$ where $*$ refers to the push-forward and $L$ to the left-translation $L_g:h\mapsto gh$ of $SE(3)$. Define the co-adjoint action $ad^*$ associated to the metric:
\begin{equation}
    \label{eq:ad_star}
    \forall a,b,c \in \se(3), \qquad <[a,b],c>=<ad_a^*c,b>.
\end{equation}
It can be computed explicitly using the structure constants. Then we have the evolution equations:
\begin{align}
    \label{eq:E-L}
    \dot \gamma(t) &= (L_{\gamma(t)})_* X(t) ,\\
    \dot X(t) &= ad_{X(t)}^* X(t) \label{eq:E-P} ,
\end{align}
with initial conditions $\gamma(0)=(R_0,v_0)$ and $\dot \gamma(0)= (Q_0, u_0)$, so that $X(0) = (L_{x^{-1}})_*(Q, u_0) = (R_0^T Q_0, R_0^T u_0) = (\Omega_0, d_0)$.

\begin{figure}
    \centering
    \includegraphics[width=0.6\textwidth]{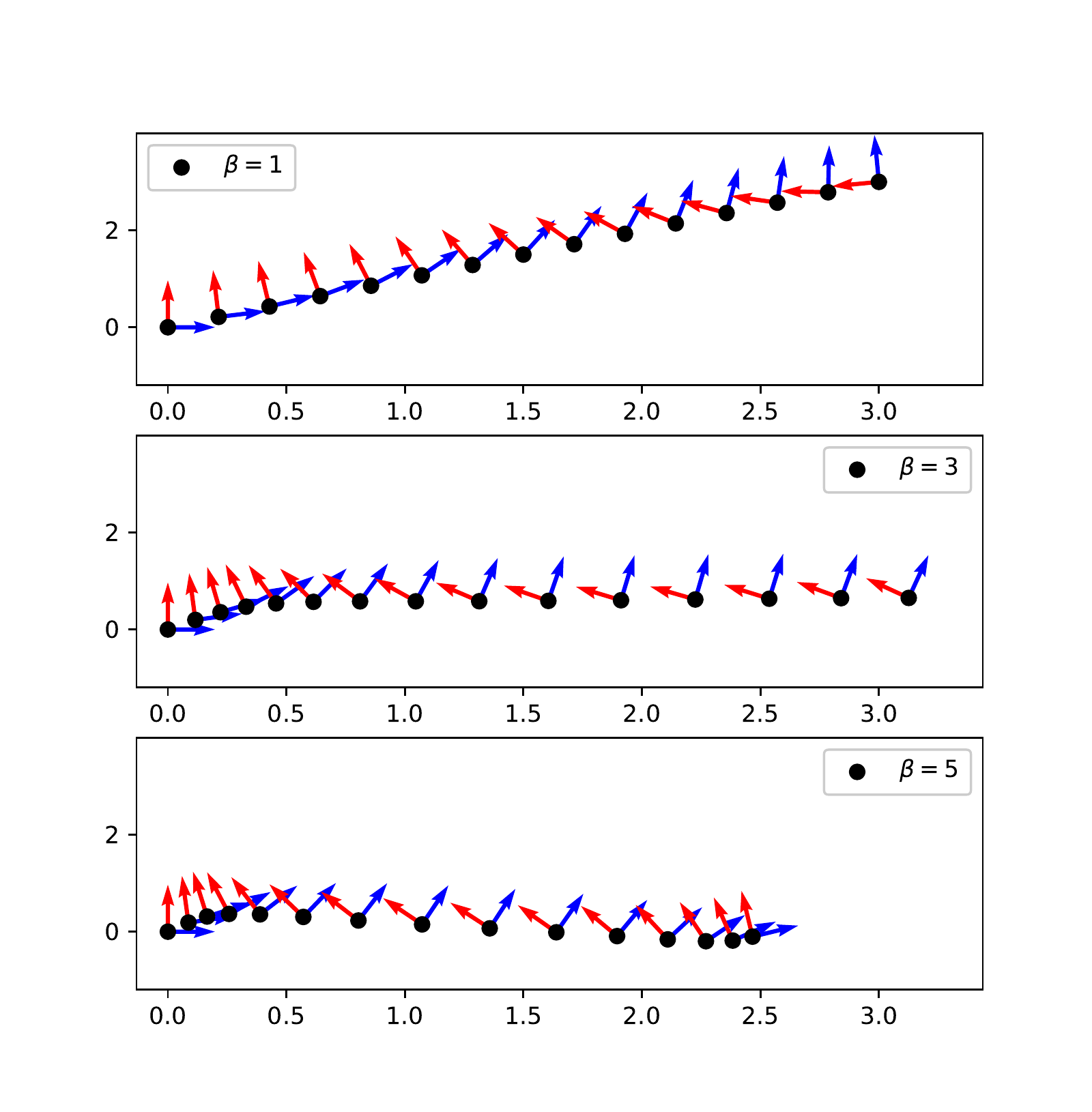}
    \vspace{-5mm}
    \caption{Visualization of geodesics of $SE(2)$ with the same initial conditions for different values of the anisotropy parameter $\beta$. The translation part are black dots, and the rotation part is applied to the orthonormal canonical frame of the 2d-plane.}
    \label{fig:geodesics_se2}
\end{figure}

\subsubsection{Case $\beta=1$}
Focusing on the case $\beta=1$ allows to validate our implementation by testing that we recover the direct product exponential map. This is proved in e.g. \cite{zefran_generation_1998}. It is straightforward to see that, in this case, $g$ coincides with the (direct) product of the canonical (bi-invariant) metrics $g_{rot}, g_{trans}$ of $SO(d)$ and $\R^d$.
So we have the general formula for geodesics from $(R, v) \in SE(3)$ with initial velocity $(\Omega, u) \in T_{(R,v)}SE(3)$: $\gamma(t) = (R \exp(tR^T\Omega), tu + v)$, and
\begin{align}
    \label{eq:exp}
    \exp_{(R,v)}(\Omega, u) = (R \exp(R^T\Omega), u + v),\\
    \log_{(R,v)}(Q, u) = (R\log(R^TQ), u - v). \label{eq:log}
\end{align}
These are in fact valid in $SE(d)$ for any $d \geq 2$. These geodesics are represented on Fig.~\ref{fig:geodesics_se2} for different values of $\beta$ in $SE(2)$ (where the metric matrix at identity is $G=diag(1,\beta, 1)$).

\subsection{Proof of lemma~\ref{lemma:sen}}
We now prove lemma~\ref{lemma:sen}, formulated as:
$(SE(3),g)$ is locally symmetric, i.e.\ $\nabla R=0$, if and only if $\beta = 1$. This is valid for any dimension $d\geq 2$ provided that the metric matrix $G$ is diagonal, of size $d(d+1)/2$, 
with ones everywhere except one coefficient of the translation part.

\begin{proof}
For $\beta=1$, $(SE(d),g)$ is isometric to $(SO(d)\times \R^d, g_{rot} \oplus g_{trans})$. As the product of two symmetric spaces is again symmetric, $(SE(d),g)$ is symmetric.

We prove the contraposition of the necessary condition. Let $\beta \neq 1$. We give $i,j,k,l$ s.t. $(\nabla_{e_i} R)(e_j,e_k)e_l \neq 0$:
\begin{align*}
    (\nabla_{e_3} R)(e_3,e_2)e_4 &= \nabla_{e_3} (R(e_3,e_2)e_4) - R(e_3,\nabla_{e_3} e_2)e_4 - R(e_3,e_2)\nabla_{e_3} e_4 \\
    &= \nabla_{e_3} (R(e_3,e_2)e_4) + \frac{1}{\sqrt{2}}R(e_3,e_1)e_4 - \frac{\tau}{2\sqrt{2}}R(e_3,e_2)e_5 .
\end{align*}
And from the above
\begin{align*}
    R(e_3,e_2)e_4 &= \nabla_{e_3} \nabla_{e_2}e_4 - \nabla_{e_2} \nabla_{e_3}e_4 - \nabla_{[e_3,e_2]}e_4 \\
        &= - \frac{\tau}{2\sqrt{2}} \nabla_{e_3}e_6 - \nabla_{e_2}e_5 + \frac{1}{\sqrt{2}}\nabla_{e_1}e_4\\
        &= 0.\\
    R(e_3,e_1)e_4 &= \nabla_{e_3} \nabla_{e_1}e_4 - \nabla_{e_1} \nabla_{e_3}e_4 - \nabla_{[e_3,e_1]}e_4 \\
        &= - \frac{\tau}{2\sqrt{2}} \nabla_{e_1}e_5 - \frac{1}{\sqrt{2}}\nabla_{e_2}e_4\\
        &= - \frac{\tau}{4} e_6 + \frac{\tau}{4} e_6 = 0.\\
    R(e_3,e_2)e_5 &= \nabla_{e_3} \nabla_{e_2}e_5 - \nabla_{e_2} \nabla_{e_3}e_5 - \nabla_{[e_3,e_2]}e_5 \\
        &= \frac{\tau}{2\sqrt{2}} \nabla_{e_2}e_4 + \frac{1}{\sqrt{2}}\nabla_{e_1}e_5\\
        &= -\frac{\tau^2}{8} e_6 + \frac{1}{2} e_6 = \frac{1}{2}(1 - \frac{\tau^2}{4})e_6 .\\
\end{align*}
And therefore
\begin{equation*}
    \beta \neq 1 \implies \tau = (\sqrt{\beta} + \frac{1}{\sqrt{\beta}}) \neq 2  \implies (\nabla_{e_3} R)(e_3,e_1)e_4 = - \frac{\tau}{4\sqrt{2}}(1 - \frac{\tau^2}{4})e_6 \neq 0,
\end{equation*}
which proves lemma~\ref{lemma:sen}.
\qed
\end{proof}

\section{Implementation}
Our implementation of the ladder methods, and the fanning scheme are made available online at \url{github.com/nguigs/ladder-methods}. The repository also contains a notebook to reproduce all the experiments of this paper. It relies on the open-source Python package \url{geomstats}, with its default \url{numpy} back-end. The automatic differentiation of the \url{autograd} package is used to compute the logs of the infinitesimal schemes, with \url{scipy}'s L-BFGS-B solver for the gradient descent.

\end{document}